\newtheorem{theorem}{Theorem}
\newtheorem{lemma}[theorem]{Lemma}
\newtheorem{corollary}[theorem]{Corollary}
\newtheorem{conjecture}[theorem]{Conjecture}
\newtheorem{construction}[theorem]{Construction}
\newtheorem{claim}[theorem]{Claim}
\newtheorem{definition}[theorem]{Definition}
\newtheorem{problem}[theorem]{Problem}
\newtheorem{remark}[theorem]{Remark}
\numberwithin{theorem}{section}
\begin{document}
\title{\bf Coloring the squares of graphs   whose maximum average degrees are less than 4}


\author{Seog-Jin Kim \\ 
\small Department of Mathematics Education\\[-0.8ex]
\small Konkuk University\\[-0.8ex]
\small Seoul, Korea \\
\small\tt skim12@konkuk.ac.kr \\
\and
Boram Park\thanks{Corresponding author: borampark@ajou.ac.kr} \\
\small Department of Mathematics\\[-0.8ex]
\small Ajou University\\[-0.8ex]
\small Suwon, Korea\\
\small\tt borampark@ajou.ac.kr
}
\maketitle
\begin{abstract}
The square $G^2$ of  a graph $G$ is the graph defined on $V(G)$ such that two vertices $u$ and $v$ are adjacent in $G^2$ if the distance between $u$ and $v$ in $G$ is at most 2.
The {\em maximum average degree} of $G$, $mad (G)$, is the maximum among the average degrees of the subgraphs of $G$.

It is known in \cite{BLP-14-JGT} that there is no constant $C$ such that every graph $G$ with $mad(G) < 4$ has $\chi(G^2) \leq \Delta(G) + C$.
Charpentier \cite{Charpentier14} conjectured that
there exists an integer $D$ such that every graph $G$ with $\Delta(G)\ge D$ and $mad(G)<4$ has $\chi(G^2) \leq 2 \Delta(G)$. Recent result in \cite{BLP-DM} implies that   $\chi(G^2) \leq 2 \Delta(G)$ if $mad(G) < 4 -\frac{1}{c}$ with $\Delta(G) \geq 40c -16$.

In this paper, we show for $c\ge 2$, if  $mad(G) < 4 - \frac{1}{c}$ and $\Delta(G) \geq 14c-7$,  then $\chi_\ell(G^2) \leq 2 \Delta(G)$, which improves the result in \cite{BLP-DM}.
We also show that for every integer $D$, there is a graph $G$ with
$\Delta(G)\ge D$ such that $mad(G)<4$, and $\chi(G^2) \geq 2\Delta(G) +2$, which disproves Charpentier's conjecture.
In addition, we give counterexamples to Charpentier's another conjecture in \cite{Charpentier14}, stating that for every integer $k\ge 3$, there is an integer $D_k$ such that every graph $G$ with
 $mad(G)<2k$ and $\Delta(G)\ge D_k$ has  $\chi(G^2) \leq k\Delta(G) -k$.
\end{abstract}

\section{Introduction}

A proper $k$-coloring $\phi: V(G) \rightarrow \{1, 2, \ldots, k \}$ of a graph $G$ is an assignment of colors to the vertices of $G$ so that any two adjacent vertices  receive distinct colors.
The {\em chromatic number} $\chi(G)$ of a graph $G$ is the least $k$ such that there exists a proper $k$-coloring of $G$.
A {\em list assignment} on $G$ is a function
$L$ that assigns each vertex $v$ a set $L(v)$ which is
a list of available colors at $v$.
A graph $G$ is said to be {\em $k$-choosable} if for any list assignment $L$ such that
$|L(v)| \geq k$ for every vertex $v$, there exists a proper coloring $\phi$ such that $\phi(v) \in L(v)$ for every $v \in V(G)$. The  {\em list chromatic number}  $\chi_\ell(G)$ of a graph $G$ is the least $k$ such that $G$ is   $k$-choosable.

The square $G^2$ of  a graph $G$ is the graph defined on $V(G)$ such that two vertices $u$ and $v$ are adjacent in $G^2$ if the distance between $u$ and $v$ in $G$ is at most 2.  
The {\em maximum average degree} of $G$, $mad (G)$, is the maximum among the average degrees of the subgraphs of $G$.  That is, ${\displaystyle mad(G) = \mbox{max}_{H \subset G} \frac{|E(H)|}{|V(H)|}}$.

The study of $\chi(G^2)$ was initiated in \cite{Wegner}, and has been actively studied.
From the fact that $\chi(G^2) \geq \Delta(G) +1$ for every graph $G$,
 a naturally arising problem is to find graphs $G$ which  satisfy $\chi(G^2) = \Delta(G) +1$.  A lot of research has been done to find sufficient conditions in terms of by girth or $mad(G)$ to be  $\chi(G^2) = \Delta(G) +1$.
Also, given a constant $C$, determining graphs $G$ which satisfy $\chi(G^2) \leq \Delta(G) + C$ is also an interesting research topic. See \cite{BI-2009,DKNS-2008,WL2003} for more information.

Bonamy, L\'{e}v\^{e}que,   Pinlou  \cite{BLP-DM} showed that $\chi_{\ell} (G^2) \leq \Delta(G) +2$ if $mad(G) < 3$ and $\Delta(G) \geq 17$.  However, it was reported in \cite{BLP-14-JGT} that there is no constant $C$ such that every graph $G$ with $mad(G) < 4$ has $\chi(G^2) \leq \Delta(G) + C$.
On the other hand, Bonamy, L\'{e}v\^{e}que,   Pinlou \cite{BLP-14-JGT} showed the following result.
\begin{theorem}[\cite{BLP-14-JGT}] \label{Bonamy-epsilon}
There exists a function $h(\epsilon)$ such that every graph $G$ with $mad(G) < 4 - \epsilon$ satisfies $\chi_{\ell}(G^2) \leq \Delta(G) + h(\epsilon)$, where $h(\epsilon) \sim \frac{40}{\epsilon}$ as $\epsilon \rightarrow 0$.
\end{theorem}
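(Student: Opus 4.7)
The plan is a discharging attack on list-coloring the square of a sparse graph, combined with a reducibility analysis that trades off against the parameter $h$. Fix $\epsilon>0$, let $h=h(\epsilon)$ be a parameter to be optimized, and assume for contradiction that $G$ is a vertex-minimum counterexample: $mad(G)<4-\epsilon$ and $\chi_\ell(G^2)>\Delta(G)+h$. The first task is to identify a catalog of \emph{reducible configurations} --- local subgraphs whose presence in $G$ would let us delete a small piece, invoke minimality to list-color the square of the remainder, and then greedily extend. The slack of $h$ unused colors in each affected list is what drives the extension: a deleted vertex $v$ can be greedily colored provided the number of its $G^2$-neighbors that received colors is at most $\Delta(G)+h-1$. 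Larger $h$ therefore permits us to forbid richer configurations. Because $mad(G)<4$, the configurations one naturally rules out concern $1$-, $2$-, and $3$-vertices: leaves pinned to low-degree vertices, ``threads'' of consecutive $2$-vertices, and $3$-vertices whose neighborhoods are dominated by other low-degree vertices.

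The second step is discharging. Assign each $v$ the initial charge $\mu(v)=d(v)-(4-\epsilon)$, so $\sum_v \mu(v)<0$ by the $mad$ hypothesis. One then designs a fixed redistribution rule sending charge from high-degree vertices to their nearby $2$- and $3$-vertices along edges and paths of length two, and checks that, in the absence of any reducible configuration, every vertex ends with non-negative charge. This contradicts the negativity of the total. High-degree vertices start with surplus of order $d(v)-(4-\epsilon)$ and can afford the gifts; low-degree vertices have deficit at most $2-\epsilon$, which the reducible configurations ensure is covered by the available donors in their closed second neighborhoods.

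The main obstacle is reconciling the two sides of this balance to yield the asymptotic $h(\epsilon)\sim 40/\epsilon$. To extend across a deleted configuration, $h$ must compensate for all $G^2$-constraints the configuration imposes, and those grow linearly with the length of any thread or the radius of any cluster of low-degree vertices one removes. On the discharging side, because each individual gift can only carry an $O(\epsilon)$ share of surplus, a low-degree vertex with $\Theta(1)$ deficit needs $\Theta(1/\epsilon)$ donors reachable within the part of its neighborhood that we have declared reducible. Forcing reducibility of neighborhoods this large in turn requires $h=\Theta(1/\epsilon)$; matching the hidden constants via a careful analysis of $2$- and $3$-vertex configurations is what produces the factor $40$. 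The bulk of the technical work will therefore be a case analysis tailored to squeeze this constant, and I expect that this constant-chasing --- rather than the overall discharging framework or the greedy extension lemma --- will be the most delicate part of the write-up.
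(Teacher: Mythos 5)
The statement you are addressing is Theorem~\ref{Bonamy-epsilon}, which this paper does not prove: it is quoted verbatim from Bonamy, L\'{e}v\^{e}que and Pinlou \cite{BLP-14-JGT}, so there is no in-paper proof to compare yours against. Judged on its own terms, your proposal is a strategy outline rather than a proof. You correctly name the standard machinery --- a minimal counterexample, reducible configurations extended greedily using the slack of $h$ spare colors, and discharging from the initial charge $d(v)-(4-\epsilon)$ --- and this is indeed the kind of argument the cited source carries out. But every piece of substantive mathematical content is deferred: you give no concrete catalog of reducible configurations, no explicit discharging rules, no verification that final charges are nonnegative, and no computation that produces the asymptotic $h(\epsilon)\sim 40/\epsilon$. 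The paragraph meant to explain why $h=\Theta(1/\epsilon)$ is a heuristic, not an argument: the claim that a low-degree vertex ``needs $\Theta(1/\epsilon)$ donors'' because ``each individual gift can only carry an $O(\epsilon)$ share of surplus'' is asserted rather than derived (in arguments of this type a single high-degree neighbor typically donates charge close to $1$, and the $1/\epsilon$ dependence instead enters through the threshold degree separating donors from recipients), and nothing in the write-up pins down the constant $40$ or even establishes that \emph{some} finite $h(\epsilon)$ suffices.

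So the gap is not one faulty step but the absence of the proof's body. To make this rigorous you would need to (i) fix the threshold degree, as an explicit function of $\epsilon$, above which a vertex is a donor; (ii) exhibit the finite list of configurations involving $1$-, $2$- and $3$-vertices that become reducible once $h$ exceeds an explicit function of that threshold; (iii) prove each reduction by bounding the number of colors forbidden in $G^2$ at the deleted vertices; and (iv) carry out the charge verification in all cases. These steps are precisely what occupies the bulk of \cite{BLP-14-JGT}, and without them the theorem --- and in particular the constant $40$ --- remains unsupported by your write-up.
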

It is known in \cite{BLP-14-JGT} that for arbitrarily large maximum degree, there exists a graph $G$ such that  $mad(G) < 4 $ and $\chi(G^2) \geq \frac{3\Delta(G)}{2}$.
On the other hand, Charpentier \cite{Charpentier14} proposed the following conjectures.
\begin{conjecture}[\cite{Charpentier14}] \label{conj-Charpentier}
There exists an integer $D$ such that every graph $G$ with $\Delta(G)\ge D$ and $mad(G)<4$ has $\chi(G^2) \leq 2 \Delta(G)$.
\end{conjecture}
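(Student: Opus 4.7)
My plan is to disprove Conjecture~\ref{conj-Charpentier}. The statement is already suspicious at small $\Delta$: the Petersen graph $P$ satisfies $\Delta(P) = 3$, $mad(P) = 3 < 4$, and, being a Moore graph of diameter~$2$, has $P^2 = K_{10}$, so $\chi(P^2) = 10 \geq 2\Delta(P) + 2$. The conjecture only survives if this failure stops scaling with $\Delta$; my goal is to show that it does scale, by exhibiting, for every integer $D$, a graph $G$ with $\Delta(G) \geq D$, $mad(G) < 4$, and $\chi(G^2) \geq 2\Delta(G) + 2$.

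The construction I would look for has two ingredients. First, a local ``witness'' gadget: a subgraph in which at least $2\Delta + 2$ vertices are pairwise within $G$-distance~$2$, so that they form a clique of that size in $G^2$. Petersen is the prototype at $\Delta = 3$; for larger $\Delta$ the witness would combine a small ``core'' (for instance, a bipartite block with carefully chosen shared neighbours, a projective-plane fragment, or a block-design configuration) with enough internal edges so that not every distance-$2$ pair has to be routed through an external common neighbour. Each high-degree vertex of the witness should have $G$-degree exactly $\Delta$.

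Second, I would pad the rest of $G$ to push $\Delta(G)$ to any prescribed value while keeping $mad(G) < 4$. For each vertex of the witness whose current degree falls short of the target $\Delta$, I attach a tree rooted at that vertex whose interior vertices have degree $2$ and whose leaves have degree $1$; such a tree has average degree close to $2$ and the root can be given any desired degree. Since the tree sits at $G$-distance~$\geq 3$ from every other witness vertex, it neither shortcuts distances inside the witness nor creates any new distance-$2$ pair among witness vertices, so the lower bound $\chi(G^2) \geq 2\Delta + 2$ inherited from the witness is preserved. A routine check then gives $mad(G) < 4$ for every subgraph of the padded~$G$, provided the witness itself remains the densest piece.

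The principal obstacle is a \emph{counting barrier} on the witness. A direct double-count shows that if a set $S$ of $2\Delta + 2$ vertices is pairwise within distance~$2$ and every non-adjacent pair is covered only by common neighbours outside $S$, then already for modest $\Delta$ the number of external covering vertices and their incident edges forces the average degree of the subgraph on $S$ together with these covering vertices above~$4$. Petersen escapes this barrier because its distance-$2$ coverings all use vertices of $S = V(P)$ itself. So the key challenge is to design, for every target $\Delta$, a witness that is internally well-connected enough for the coverings to come mostly from inside, yet still has maximum degree~$\Delta$ and overall $mad(G) < 4$. Finding a parametric family of such witnesses---perhaps based on blow-ups of Moore-like structures or incidence geometries diluted by low-degree trees---is the technical heart of what I would need to work out in detail.
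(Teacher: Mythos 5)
Your overall verdict is the right one: the paper also treats this statement as false and disproves it by exhibiting, for every $n\ge 2$, a graph $G_n$ with $\Delta(G_n)=n+1$, $mad(G_n)<4$, and a clique of size $2\Delta(G_n)+2$ in $G_n^2$ (Construction~\ref{construction-one} and Claim~\ref{claim-construction}). You also correctly recognize that a single small example like the Petersen graph does not suffice and that the failure must be made to scale with $\Delta$. But your proposal stops exactly where the proof begins: you never produce the witness family, and you explicitly defer ``the technical heart'' to future work. Since the entire content of the disproof \emph{is} that construction, this is a genuine gap, not a stylistic difference.

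It is worth recording how the paper gets past the ``counting barrier'' you describe, because your framing of it points you away from the solution. You suggest the coverings must come ``mostly from inside'' the clique set $S$, as in Petersen. The paper does the opposite: almost every distance-$2$ pair is covered externally, but cheaply. Take two adjacent hubs $u,v$, with $N(u)\supset S=\{u_1,\dots,u_n\}$ and $N(v)\supset T=\{v_1,\dots,v_n\}$. All $\binom{n}{2}$ pairs inside $S$ are covered by the single vertex $u$, likewise for $T$ and $v$, so only the $n^2$ cross-pairs $u_iv_j$ need individual covers, each handled by a degree-$2$ vertex $x_{ij}$. Attaching $n^2$ degree-$2$ vertices to $2n$ vertices of degree $n+1$ yields average degree $4-\frac{10}{n^2+2n+2}$, just below the threshold; your heuristic that external covering ``forces the average degree above $4$'' is therefore false for this hub-and-grid layout. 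This gives a distance-$2$ clique $\{u,v\}\cup S\cup T$ of size $2n+2=2\Delta$; the last $+2$ comes from wiring two grid vertices $x_{11},x_{12}$ into the clique via the extra edges $x_{11}x_{ii}$ and $x_{12}x_{i(i+1)}$. Finally, note that verifying $mad<4$ is not the ``routine check'' you anticipate: the paper runs a minimal-counterexample argument on the potential $\rho_G(A)=2|A|-\|A\|$ over all subsets $A\subset V(G)$. No degree-boosting trees are needed, since every high-degree vertex of the witness already has degree exactly $\Delta=n+1$.
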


\begin{conjecture}[\cite{Charpentier14}] \label{conj-k}
For each integer $k \geq 3$, there exists an integer $D_k$ such that every graph $G$ with $\Delta(G) \geq D_k$ and $mad(G) < 2k$ has $\chi(G^2) \leq k \Delta(G) -k$.
\end{conjecture}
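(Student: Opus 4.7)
My plan is to disprove Conjecture~\ref{conj-k} by constructing counterexamples. For each $k \geq 3$ and every integer $D$, I aim to build a graph $G$ with $\Delta(G) \geq D$, $mad(G) < 2k$, and $\chi(G^2) \geq k\Delta(G) - k + 1$, contradicting the conjectured bound $\chi(G^2) \leq k\Delta(G) - k$. The construction is bipartite: fix $\Delta \geq \max(D, 2k)$, let $A$ be an $n$-set with $n := k\Delta - k + 1$, and let $\mathcal{B}$ be a collection of $\Delta$-subsets of $A$ (``blocks'') such that (i) every pair of points lies in at least one block, and (ii) each point lies in at most $2k-1$ blocks. Take $G$ to be the bipartite graph on $A \cup \mathcal{B}$ with point-block incidences as edges.

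Granting such $\mathcal{B}$, the three required properties follow directly. Condition (i) makes $A$ a clique in $G^2$, so $\chi(G^2) \geq |A| = k\Delta - k + 1$. Each block-vertex has degree $\Delta$, and by (ii) each point-vertex has degree at most $2k - 1 \leq \Delta$, so $\Delta(G) = \Delta \geq D$. For any induced subgraph $H$ of $G$ with parts $A' \subseteq A$ and $\mathcal{B}' \subseteq \mathcal{B}$, condition (ii) gives $|E(H)| = \sum_{a \in A'} d_H(a) \leq (2k-1)|A'|$, so $|E(H)|/|V(H)| \leq (2k-1)|A'|/(|A'|+|\mathcal{B}'|) < 2k$, and thus $mad(G) < 2k$.

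The main obstacle is producing the covering design. The Sch\"onheim lower bound forces $|\mathcal{B}| \geq \binom{n}{2}/\binom{\Delta}{2} \sim k^2$, while condition (ii) caps $|\mathcal{B}| \leq (2k-1)n/\Delta \approx 2k^2$, so $|\mathcal{B}|$ must sit in the narrow window $[k^2, 2k^2]$ with near-balanced replication. For sufficiently large $\Delta$, the R\"odl nibble and related asymptotic existence results for $2$-coverings yield designs with $|\mathcal{B}| = (1+o(1))k^2$ and (via standard concentration) maximum replication $(1+o(1))k$, both within our required bounds. I expect the most delicate step to be arranging the maximum replication (not just the average) below $2k$ uniformly in $\Delta$; a safe fallback is to refute the conjecture along an infinite subsequence of $\Delta$ for which explicit designs with balanced replication exist, which still suffices since the conjecture is asserted to hold for \emph{all} $\Delta \geq D_k$.
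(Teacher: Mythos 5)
Your high-level strategy --- an incidence structure whose square contains a clique on more than $k\Delta(G)-k$ vertices while $mad$ stays below $2k$ --- is the right one and is the same spirit as the paper's Appendix construction. However, your verification that $mad(G)<2k$ contains a genuine error, and the error is fatal to this particular construction. The maximum average degree is $\max_{H}2|E(H)|/|V(H)|$; the paper's displayed formula happens to omit the factor $2$, but its potential functions (e.g.\ $\rho_G(A)=k|A|-||A||\ge 1$ for all $A$ being equivalent to $mad(G)<2k$) make clear that the condition you must check is $|E(H)|<k\,|V(H)|$ for every subgraph $H$, not $|E(H)|<2k\,|V(H)|$. Your estimate $|E(H)|\le(2k-1)|A'|$ therefore proves nothing, and the construction in fact fails already at $H=G$: one has $2|E(G)|/|V(G)|=2\Delta|\mathcal{B}|/(n+|\mathcal{B}|)$ with $n=k\Delta-k+1$ and $|\mathcal{B}|$ bounded (at most $2k^2$), so this ratio tends to $2|\mathcal{B}|/k$ as $\Delta\to\infty$. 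Since the Sch\"onheim bound forces $|\mathcal{B}|\ge k^2$, you get $mad(G)<2k$ for large $\Delta$ only if $|\mathcal{B}|=k^2$ \emph{exactly} (even $|\mathcal{B}|=k^2+1$ gives a ratio tending to $2k+2/k>2k$), and then $\sum_{a}(r(a)-k)=k^2-k$, i.e.\ all but $k^2-k$ points must lie in exactly $k$ blocks. So the R\"odl nibble, which only delivers $(1+o(1))k^2$ blocks, is useless here; you need a covering meeting the Sch\"onheim bound exactly with essentially constant replication $k$, which is a hard design-theoretic existence question (for $k=2$ one can check such coverings do not exist, which is a warning sign). The construction is therefore not merely under-justified: in the regime you need it very likely does not exist.

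The paper escapes this trap by dualizing the roles of points and blocks. Its Construction in the Appendix is a transversal design $TD(k,n)$ built from $k-2$ mutually orthogonal Latin squares of prime order $n$, together with $k$ apex vertices: there are \emph{many} block vertices ($n^2$ vertices $x_{i,j}$) of \emph{small} degree ($k$ or $k+1$, plus a few of degree $2k-1$), against only $kn+k$ point vertices of degree about $\Delta=n+k-1$. The block side then dominates $|V(H)|$ and dilutes the density, giving $2|E|/|V|\approx 2kn^2/(n^2+kn)<2k$ with room to spare; same-group pairs of points are joined in $G^2$ through the apexes, and a few extra edges inside $X$ upgrade the clique in $G^2$ to size $k\Delta+k$, which is an even stronger refutation than your target $k\Delta-k+1$. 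If you want to salvage your approach, replace ``few blocks of size $\Delta$'' by ``many blocks of bounded size'', which is essentially what the paper does.
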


It was mentioned in \cite{Charpentier14} that   Conjecture \ref{conj-Charpentier} and Conjecture \ref{conj-k} are best possible, if they are true.
In this paper,  we  disprove Conjecture \ref{conj-Charpentier}
by showing that for any positive integer $D$, there is a graph $G$ with
$\Delta(G)\ge D$ and $mad(G)<4$ such that $\chi(G^2) \geq 2\Delta(G) +2$.
Precisely, for arbitrarily positive integer $d\ge 2$, there exists a graph $G_d$ with $\Delta(G) = d+1$ such that $mad(G) = 4 - \frac{10}{d^2 +1}$ and the maximum clique size of $G_d^2$ is $2 \Delta(G_d) +2$.
It means that there is no constant $D_0$ such that every graph $G$ with $mad(G) < 4$  and $\Delta(G) \geq D_0$ satisfies that $\chi(G^2) \leq 2 \Delta(G)$. In addition, we give counterexamples to Conjecture \ref{conj-k} by using similar idea.

As a modification of Conjecture \ref{conj-Charpentier}, we are interested in finding the optimal value $h(c)$ such that $\chi(G^2) \leq 2 \Delta(G)$ (or $\chi_\ell(G^2) \leq 2 \Delta(G)$) for every graph $G$ with $mad(G) < 4 - \frac{1}{c}$ and $\Delta(G) \geq h(c)$.  Our main theorem of this paper is the following, which shows that $h(c) \leq 14c-7$.

\begin{theorem} \label{main-thm}
Let $c$ be an integer such that $c\ge2$.   If a graph $G$ satisfies $mad(G) < 4 - \frac{1}{c}$ and $\Delta(G) \geq 14c-7$, then $\chi_{\ell}(G^2) \leq 2 \Delta(G)$.
\end{theorem}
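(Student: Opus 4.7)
The plan is to apply the discharging method to a minimal counterexample. Let $G$ be a graph satisfying $mad(G) < 4 - \tfrac{1}{c}$ and $\Delta(G) \geq 14c-7$, minimizing $|V(G)| + |E(G)|$ among those admitting a $2\Delta(G)$-list assignment $L$ on $V(G^2)$ with no proper $L$-coloring. The first step is to compile a list of \emph{reducible configurations}: local substructures of $G$ whose presence contradicts the minimality of $G$. The basic tool is greedy extension: a vertex $v$ with $d_{G^2}(v) \leq 2\Delta(G) - 1$ can always be colored last, so $G$ contains no such vertex. This immediately forbids $\delta(G) = 1$ and more generally gives a lower bound on $\sum_{u \in N(v)} d(u)$ for every low-degree vertex $v$. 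I then expect to rule out further configurations by partial uncolor/recolor arguments: short paths of $2$-vertices whose endpoints lack ``major'' neighbors (defined by a degree threshold depending on $c$), $3$-vertices whose $2$-neighborhood is mostly minor, and similar clusters of low-degree vertices around a single high-degree one. The hypothesis $\Delta(G) \geq 14c-7$ enters precisely here, as the slack needed to guarantee an available color at the last vertex of each extension order.

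The second step is discharging. I would assign each vertex $v$ an initial charge $\mu(v) = d(v) - 4 + \tfrac{1}{c}$, so that the hypothesis $mad(G) < 4 - \tfrac{1}{c}$ yields
\[
\sum_{v \in V(G)} \mu(v) \;=\; 2|E(G)| - \Bigl(4 - \tfrac{1}{c}\Bigr)|V(G)| \;<\; 0.
\]
Only $2$-vertices and $3$-vertices carry negative initial charge ($-2+\tfrac{1}{c}$ and $-1+\tfrac{1}{c}$, respectively), while every vertex of degree at least $4$ starts with $\mu(v) \geq \tfrac{1}{c}$. I would then introduce standard redistribution rules: each major vertex sends a prescribed amount of charge to each adjacent minor vertex, with additional transfers along $2$-vertex chains. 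Using the reducible configurations from Step 1, I would verify that every vertex finishes with non-negative charge, contradicting the displayed inequality.

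The main obstacle will be Step 1. The reducibility lemmas must simultaneously be (i) strong enough to zero out the negative charge at every minor vertex under the redistribution rules, and (ii) actually reducible --- that is, one must exhibit an extension order that respects all list constraints. Improving the threshold from $\Delta(G) \geq 40c-16$ in \cite{BLP-DM} down to $\Delta(G) \geq 14c-7$ must come from sharper reducibility, presumably by extracting more structural information about how low-degree vertices cluster around high-degree ones in a $mad$-bounded graph. A secondary technical snag is that $(G^2)[V(G) - v] \neq (G-v)^2$ in general, so the inductive list coloring of $(G-v)^2$ must either be augmented with extra constraints on the former neighbors of $v$, or the reducible configurations must be chosen so that this discrepancy is harmless.
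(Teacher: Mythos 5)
Your proposal is a template for a discharging argument rather than a proof: the entire content of the theorem lives in the reducible configurations and the verification of the rules, and you explicitly defer both (``I then expect to rule out further configurations\dots'', ``The main obstacle will be Step 1''). The one configuration you do establish --- that no vertex $v$ has $d_{G^2}(v)\le 2\Delta(G)-1$ --- is nowhere near enough; the paper needs, in particular, the quantitative lemma that a vertex with a $2$-neighbor satisfies $\sum_{x\in N(u)}d(x)\ge 2\Delta(G)$ (proved by deleting the edge to the $2$-vertex and recoloring both endpoints), and, crucially, that \emph{every $3$-vertex has a neighbor of degree at least $4c$}. That last lemma is what makes the linear bound $14c-7$ possible, and it is proved by a non-local reduction: delete the $3$-vertex $u$ with neighbors $x_1,x_2,x_3$ and add three new degree-$2$ vertices $y_1,y_2,y_3$ joined to the pairs $\{x_1,x_2\},\{x_2,x_3\},\{x_1,x_3\}$, then check via a potential function $(4c-1)|A|-2c\|A\|$ that the new graph still has $mad<4-\tfrac1c$.

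This exposes a concrete incompatibility in your setup: you take a minimal counterexample with respect to $|V(G)|+|E(G)|$, but the reduction just described \emph{increases} both $|V|$ and $|E|$ (by $2$ and $3$ respectively). The paper instead orders graphs lexicographically by (number of $3^+$-vertices, number of edges); the splitting reduction strictly decreases the first coordinate because $u$ disappears, the $y_i$ are $2$-vertices, and each $x_i$ keeps its degree. With your ordering the key lemma cannot be derived, and without it there is no evident way to discharge the $3$-vertices (which carry deficit $1-\tfrac1c$) at the threshold $\Delta(G)\ge 14c-7$. Your flagged concern that $(G-v)^2\ne (G^2)-v$ is legitimate but minor --- the paper sidesteps it by always counting forbidden colors directly in $G^2$ after coloring the smaller graph --- whereas the missing induction order and the missing $3$-vertex lemma are essential gaps.
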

Note that Theorem \ref{Bonamy-epsilon} implies that if  $G$ is a graph with $mad(G) < 4 -\frac{1}{c}$ and $\Delta(G) \geq 40c -16$, then $\chi_{\ell}(G^2) \leq 2 \Delta(G)$.  Thus Theorem \ref{main-thm} gives a better bound on $h(c)$ than Theorem \ref{Bonamy-epsilon} when $14c - 7 \leq \Delta(G) \leq 40c - 17$.

Next, 
we will show that $h(c) \geq 2c+2$.  Thus the current bound on $h(c)$ is
$2c+2 \leq h(c) \leq 14c -7$.  Hence it would be interesting  to solve the following problem.

\begin{problem} \label{problem-h(c)}\rm Given a positive integer $c\ge 1$,  there is a function $h(c)$ such that
 $\chi(G^2) \leq 2 \Delta(G)$ (or $\chi_{\ell}(G^2)\leq 2\Delta(G)$) whenever a graph $G$ satisfies $mad(G) < 4 -\frac{1}{c}$ and $\Delta(G) \geq h(c)$.
What is the optimal value of $h(c)$?  Or, reduce the gap in $2c+2 \leq h(c) \leq 14c -7$.
\end{problem}

\begin{remark}\rm Yancey \cite{Yancey15} showed that for $t \geq 3$, if $G$ is a graph  with $mad(G) < 4 - \frac{4}{t+1} - \epsilon$ for some $\frac{4}{t(t+1)} > \epsilon > 0$, then $\chi_{\ell}(G^2) \leq
\max\{ \Delta(G) + t, \ 16t^2 \epsilon^{-2}\}$.
We can convert $mad(G) < 4 - \frac{4}{t+1} - \epsilon$ into $mad(G) < 4 - \frac{1}{c}$ form by setting
$\frac{4}{t+1} + \epsilon = \frac{1}{c}$.  Then from $0 < \epsilon  < \frac{4}{t(t+1)} < 1$, we have that
 $    \epsilon< \frac{1-c\epsilon}{c} \times  \frac{1 - c\epsilon }{4c +c\epsilon-1}$, and consequently $0< \epsilon  < \frac{1}{c(4c+1)} < 1$.  Thus $0 < 1 - c \epsilon < 1$, and consequently, we have
$t = \frac{4c}{1-c\epsilon}-1 > 4c -1$.  Hence, when $16t^2 \epsilon^{-2} \le   2\Delta(G)$, we have
\[
\Delta(G) \geq 8t^2 \epsilon^{-2} > 8 (4c -1)^2 c^2 (4c+1)^2
\]
since $\epsilon  < \frac{1}{c(4c+1)}$.  Thus Yancey's result implies that $\chi (G^2) \leq 2 \Delta(G)$ only when $mad(G) < 4 - \frac{1}{c}$ and $\Delta(G) \geq t_0 c^6$ for some constant $t_0$.
But, note that in our result, the lower bound on $\Delta(G)$ is linear as $\Delta(G) \geq 14c-7.$
\end{remark}

This paper is organized as follows. In Section \ref{sec2}, we will give a construction which is a counterexample to Conjecture \ref{conj-Charpentier}, and  in Section \ref{section-main}, we will prove Theorem \ref{main-thm} using discharging method.
In Section \ref{section-remark}, we modify the construction in Section \ref{sec2} slightly, and show that for any positive integer $c$, there exists a graph $G$ such that $mad(G) < 4 - \frac{1}{c}$, $\Delta(G) = 2c+1$, and $\chi(G^2) = 2 \Delta(G) + 1$, which implies that $h(c) \geq 2c+2$. And next, in Appendix, we will  give counterexamples to Conjecture \ref{conj-k}.


\section{Construction}\label{sec2}

We will show that for any positive integer $n\ge 2$, there is a graph $G$ with
$\Delta(G)= n+1$ such that $mad(G)<4$, and $\chi(G^2)>2\Delta(G)$.
Let $[n] = \{1, \ldots, n\}$.

\bigskip

\begin{construction} \label{construction-one}\rm
Let $n \geq 2$ be a positive integer. Let $S=\{u_1,u_2,\ldots,u_n\}$, $T=\{v_1,\ldots,v_n\}$, and $X=\{x_{ij} \mid (i,j) \in [n] \times [n] \}$.
We define a graph $G_n$ by
 \begin{eqnarray*}
  V(G_n)&=&\{u,v\}\cup S \cup T \cup X \\
  E(G_n)&=&\{uv\}\cup \{uu_i\mid u_i\in S\} \cup \{vv_i\mid v_i\in T\} \\&&\cup
  \left( \bigcup_{i=1}^{n} \bigcup_{j=1}^{n}  \{u_ix_{ij},v_jx_{ij} \}\right) \cup
  \left( \bigcup_{i=2}^{n} \{ x_{11}x_{ii}\}\right) \cup
  \left( \bigcup_{i=2}^{n} \{ x_{12}x_{i(i+1)}\}\right),
\end{eqnarray*}
where 
 $x_{n (n+1)} = x_{n1}$. See Figure~\ref{fig1} for an illustration.
 \begin{figure}
 \centering
\psfrag{a}{\footnotesize$u_1$}
\psfrag{b}{\footnotesize$u_2$}
\psfrag{c}{\footnotesize$u_3$}
\psfrag{d}{\footnotesize$u_{n-1}$}
\psfrag{e}{\footnotesize$u_n$}
\psfrag{f}{\footnotesize$v_1$}
\psfrag{g}{\footnotesize$v_2$}
\psfrag{h}{\footnotesize$v_3$}
\psfrag{i}{\footnotesize$v_{n-1}$}
\psfrag{j}{\footnotesize$v_n$}
\psfrag{u}{\footnotesize$u$}
\psfrag{v}{\footnotesize$v$}
\psfrag{k}{\footnotesize$x_{11}$}
\psfrag{l}{\footnotesize$x_{22}$}
\psfrag{m}{\footnotesize$x_{33}$}
\psfrag{n}{\footnotesize$x_{n-1,n-1}$}
\psfrag{o}{\footnotesize$x_{nn}$}
\psfrag{p}{\footnotesize$x_{12}$}
\psfrag{q}{\footnotesize$x_{23}$}
\psfrag{r}{\footnotesize$x_{n-1,n}$}
\psfrag{s}{\footnotesize$x_{n1}$}
\includegraphics[width=10cm]{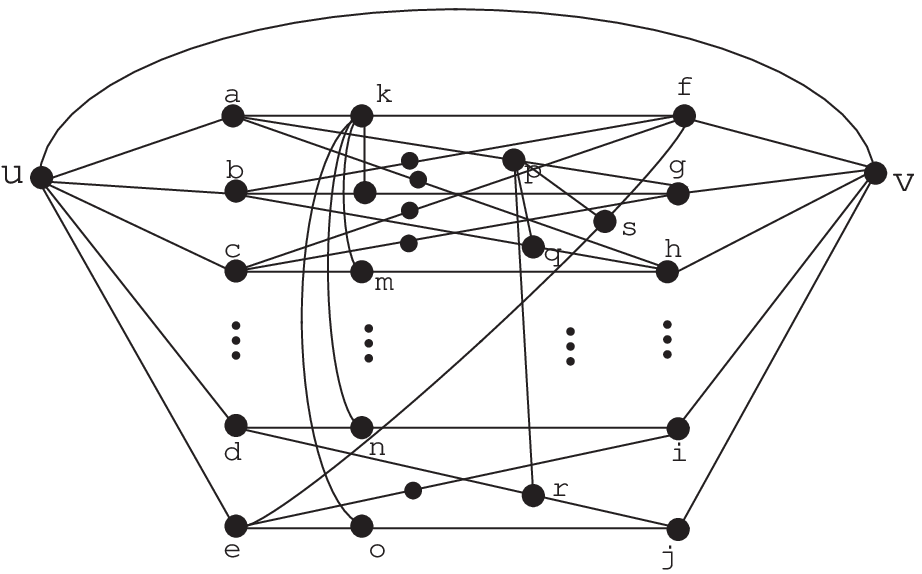}\\
  \caption{Construction of $G_n$}\label{fig1}
\end{figure}
\end{construction}
We have the following simple observations.
\begin{itemize}
\item For $y\in \{u,v\}\cup S\cup T$, $d(y)=n+1$.
\item For $x_{ij}\in X$,
\[d(x_{ij})=\left\{\begin{array}{ll} n+1 & \text{if } (i, j) = (1, 1), \mbox{ or } (1, 2) \\
  3 & \text{if }j=i+r \text{ for }r\in\{0,1\}\text{ and }i\ge 2,  \\
  2 & \text{otherwise, }
\end{array}\right.\]
where   $x_{n (n+1)} = x_{n1}$.
\end{itemize}
Therefore $\Delta(G_n)=n+1$ and $\{u,v,x_{11}, x_{12}\}\cup S\cup T$ is a clique in $G_n^2$ with $2n+4$ vertices.
Thus $\chi(G_n^2) \geq 2\Delta(G_n) +2$.

From now on,
we denote $G_n$ by $G$ for simplicity.
Next, we will show that $mad(G)<4$.  Denote  the number of edges of the subgraph of $G$ induced by $A$ by
$||A||$, that is,
$|E(G[A])| = ||A||$.
Define a potential function $\rho_G:2^{V(G)}\rightarrow \mathbb{Z}$ by for $A\subset V(G)$,
\[
\rho_G (A) = 2|A| - ||A||.
\]
Note that $\rho_G(A) \geq 1$ for every subset $A \subset V(G)$ is equivalent to $mad(G) < 4 $.

\bigskip

We will show that $\rho_G(A)\ge 1$ for all $A\subset V(G)$.  A vertex of degree $k$ is called a $k$-vertex, and a vertex of degree at least $k$ (at most $k$) is called a $k^+$-vertex ($k^{-}$-vertex).

\begin{claim} \label{claim-construction}
For all $A\subset V(G)$, $\rho_G(A)\ge 1$.
\end{claim}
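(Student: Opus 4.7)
The plan is to argue by contradiction: suppose some nonempty $A \subseteq V(G)$ has $\rho_G(A) \le 0$, and take such an $A$ of minimum cardinality. First I would note that removing any $v \in A$ with $d_{G[A]}(v) \le 2$ changes $\rho_G$ by $d_{G[A]}(v) - 2 \le 0$, so if $A \setminus \{v\}$ is nonempty it is a strictly smaller counterexample, contradicting minimality. Since a singleton has $\rho_G = 2 > 0$, this forces $|A| \ge 2$ and $d_{G[A]}(v) \ge 3$ for every $v \in A$.

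This minimum-degree condition is extremely restrictive given the degree sequence of $G$. Every $x_{ij}$ of $G$-degree $2$ is excluded from $A$, and every $G$-degree-$3$ vertex --- the ``diagonal'' $x_{ii}$ with $i \ge 2$ and the ``shifted diagonal'' $x_{i(i+1)}$ with $i \ge 2$ (using $x_{n(n+1)} = x_{n1}$) --- can lie in $A$ only if all three of its $G$-neighbors do. I would then introduce $I_\gamma = \{i \ge 2 : x_{ii} \in A\}$ and $I_\delta = \{i \ge 2 : x_{i(i+1)} \in A\}$. If $I_\gamma = I_\delta = \emptyset$, a short cascade through the remaining potentially high-degree vertices (each failing $d_{G[A]}(v) \ge 3$ once its degree-$3$ $x$-neighbors are unavailable) forces $A = \emptyset$, a contradiction.

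Hence $I_\gamma \cup I_\delta \ne \emptyset$, and the heart of the argument is a propagation step. For $i \in I_\gamma$, the vertex $u_i$ has exactly three $G$-neighbors of $G$-degree $\ge 3$, namely $u$, $x_{ii}$, and $x_{i(i+1)}$, so $d_{G[A]}(u_i) \ge 3$ forces $u \in A$ and $i \in I_\delta$. The symmetric analysis of $u_i$ for $i \in I_\delta$ yields $I_\delta \subseteq I_\gamma$, so $I_\gamma = I_\delta$; and analyzing $v_i$ for $i \in I_\gamma$ with $i \ge 3$ (respectively $v_{i+1}$ for $i \in I_\delta$ with $i \le n-1$) shows that $I_\gamma$ is closed under $i \mapsto i - 1$ and $i \mapsto i + 1$ inside $[2,n]$. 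A nonempty such subset of $[2,n]$ must equal $[2,n]$.

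Once $I_\gamma = I_\delta = [2,n]$ is pinned down, $A$ is determined up to whether $u_1 \in A$, and a direct vertex/edge count gives $\rho_G(A) \in \{5, 6\}$, contradicting $\rho_G(A) \le 0$. The main obstacle I anticipate is this propagation: one must precisely identify the degree-$\ge 3$ $G$-neighbors of each $u_i$ and $v_j$, accounting for the wraparound $x_{n(n+1)} = x_{n1}$ and the special high-degree roles of $x_{11}$, $x_{12}$, and $v_1$. In the borderline case $n = 2$ one has to additionally notice that $u_1 \in A$ is forced --- otherwise $d_{G[A]}(u) = 2$ --- so that $A = V(G)$ and again $\rho_G(A) = 5$, completing the argument.
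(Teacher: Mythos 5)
Your proposal is correct, and I verified its key assertions: for $i\ge 2$ the only $3^+$-neighbors of $u_i$ are $u$, $x_{ii}$, $x_{i(i+1)}$, and for $i\ge 3$ the only $3^+$-neighbors of $v_i$ are $v$, $x_{ii}$, $x_{(i-1)i}$, so the propagation does force $I_\gamma=I_\delta=[2,n]$ once one degree-$3$ vertex of $X$ lies in $A$; the cascade in the case $I_\gamma=I_\delta=\emptyset$ empties $A$ as you claim; and the terminal count indeed gives $\rho_G(A)=5$ (with $u_1\in A$) or $6$ (without), including the $n=2$ boundary case. However, your route is genuinely different from the paper's. After establishing the same starting point (a minimal counterexample $A$ with $\delta(G[A])\ge 3$), the paper does not try to identify $A$: it observes that the degree-$3$ vertices $X_3'=(X\cap A)\setminus\{x_{11},x_{12}\}$ form an independent set of $3$-vertices of $G[A]$, so deleting them raises the potential by exactly $|X_3'|$, i.e. $\rho_G(A\setminus X_3')=\rho_G(A)+|X_3'|\le |X_3'|$; it then bounds $\rho_G(A\setminus X_3')\ge\rho_G(A\setminus X)=\alpha+3\ge |X_3'|+3$ (where $\alpha=|A\cap(S\cup T)|$, using that $G[A\setminus X]$ is a tree on $\alpha+2$ vertices), which is the contradiction. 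Your argument instead pins down $A$ exactly via the rigidity of the degree-$3$ chain and computes $\rho_G(A)$ directly. The trade-off: your version is more elementary and makes the extremal sets explicit, but it leans heavily on the precise adjacency pattern of $G_n$ (the way the diagonals link up), whereas the paper's independent-set-deletion plus counting argument is shorter and transfers essentially verbatim to the more elaborate construction $G_{k,n}$ in the Appendix (Claim 4.3), where an explicit propagation analysis would be considerably more painful. Both are valid proofs of the Claim.
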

\begin{proof}
Suppose that there is $A\subset V(G)$ such that  $\rho_G(A)\le 0$.
Let $A$ be a smallest subset of $V(G)$ among all subsets of $V(G)$ with minimum value $\rho_G(A)$.
That is, $A$ is a minimal counterexample to Claim \ref{claim-construction}.

If $G[A]$ contains a $2^{-}$-vertex $v$ then $\rho_G(A\setminus\{v\})\le \rho_G(A)$, which is a contradiction to the minimality of $\rho_G(A)$ or the minimality of $|A|$. Thus $G[A]$ does not have a $2^{-}$-vertex.

Let $X_3$ be the set of $3^+$-vertices in $X$.
Then every vertex in $X  \setminus X_3$ does not belong to $A$, since each vertex in $X  \setminus X_3$ is a 2-vertex.

If $a \notin A$ and $a$ has at least three neighbors in $A$, then $\rho_G(A\cup\{a\})<\rho_G(A)$, a contradiction  to the minimality of $\rho_G(A)$.  Thus
every vertex not in $A$ has at most two neighbors in $A$.

Next, we will show that $\{u,v\}\subset A$.
If $|A\cap S|\le 1$, then any vertex in $A\cap T$ is a $2^-$ vertex of $G[A]$, a contradiction.
Thus $|A\cap S|\ge 2$.
Suppose that $|A\cap S|= 2$. If  $v \not\in A$, then $A\cap T$ has a 2-vertex of $G[A]$, which is forbidden.
Thus $v\in A$, and then $u$ is adjacent to three vertices of $A$, and so $u\in A$. Therefore $\{u,v\} \subset A$.
Similarly, if $|A\cap T|= 2$, then $\{v,v\} \subset A$.
On the other hand, if $|A\cap S|\ge 3$ and $|A\cap T|\ge 3$, then $\{v,v\} \in A$, since
every vertex not in $A$ has at most two neighbors in $A$.
Therefore, we can conclude that $\{u,v\}\subset A$.

Let $X_3'$ be the set of 3-vertices of $G$ in $X\cap A$.  That is,
$X_3' = (X_3 \cap A) \setminus \{x_{11}, x_{12}\}$.
As we noted that every vertex in $X  \setminus X_3$ does not belong to $A$,
in fact, $X_3' = (X \cap A) \setminus \{x_{11}, x_{12}\}$.
Note that every vertex in $X_3'$ is also a 3-vertex of $G[A]$.
Since any two vertices in $X_3'$ are not adjacent in $G$, we have
\[ \rho_{G}(A\setminus X_3') = 2|A\setminus X_3'|-||A\setminus X_3'|| = 2|A|-2|X_3'|-||A||+ 3||X_3'|| = \rho_G(A) +|X_3'|.\]
Since $\rho_G(A)\le 0$,
\begin{eqnarray}\label{eq-claim2}
&&\rho_{G}(A\setminus X_3')  \le |X_3'|.
\end{eqnarray}
Note that each vertex in $(A\setminus  X_3')\cap X = A \cap \{x_{11}, x_{12}\}$ has degree at most $2$ in $G[A\setminus X_3']$, and therefore, we have
$\rho_{G}(A\setminus X_3') \ge \rho_{G}(A\setminus X)$.

Let $\alpha=|A\cap (S\cup T)|$ for simplicity.
Note that $\alpha \ge |X_3'|$, since for vertex $x$ in $X_3'$, $x$ is a $3$-vertex in both $G$ and $G[A]$, and so $N_G(x)\subset A$.
Now note that $G[A\setminus X]$ has $\alpha+2$ vertices and has $\alpha+1$  edges.
Thus
\[ \rho_{G}(A\setminus X_3') \ge \rho_{G}(A\setminus X) \ge  2\alpha + 4-(\alpha+1) \ge \alpha +3 \ge |X_3'| +3,\]
a contradiction to \eqref{eq-claim2}.
Therefore $\rho_G(A) \geq 1$ for every subset $A \subset V(G)$.
This completes the proof of Claim \ref{claim-construction}.
\end{proof}

\begin{remark}\rm In Appendix,
we  will also show that Conjecture \ref{conj-k} is not true. That is, for any integers $k$ and $n$ such that $k\ge 2$ and $n\ge k^2-k$, there exists a graph $G$ such that $mad(G)<2k$, $\Delta(G)\ge n$, and $\chi(G^2)\ge k\Delta(G)+k$. The construction for $k \geq 3$ is  similar to
Construction \ref{construction-one}.
\end{remark}

\section{Proof of Theorem \ref{main-thm}} \label{section-main}

We use double induction on the number of $3^{+}$-vertices first, and then on the number of edges.

\begin{definition}
Let $n_3(G)$ be the number of $3^{+}$-vertices of $G$. We order graphs as follows.
Give two graphs $G$ and $G'$, say that $G'$ is {\em smaller than} $G$ if  (1) $n_3(G') < n_3 (G)$, or
(2) $n_3(G') = n_3 (G)$ and $|E(G')| < |E(G)|$.
\end{definition}

Throughout this section, we let $G$ be a minimal counterexample to Theorem \ref{main-thm}.

\begin{lemma} \label{nbr-lemma}
If a vertex $u$ has a neighbor of degree 2, then \[\displaystyle \sum_{x \in N(u)} d(x) \geq 2 \Delta(G).\]
\end{lemma}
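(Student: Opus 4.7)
The plan is to argue by contradiction: assume that $u$ has a $2$-neighbor $w$ with $N(w)=\{u,w'\}$ and $\sum_{x\in N(u)}d(x)<2\Delta(G)$. I will construct a proper $L$-coloring of $G^{2}$ from any list assignment $L$ with $|L(v)|\ge 2\Delta(G)$, contradicting the choice of $G$ as a minimal counterexample.

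The first step will be to apply the minimality of $G$ to $G':=G-w$. Since $w$ has degree $2$, we have $|E(G')|=|E(G)|-2<|E(G)|$ and $n_{3}(G')\le n_{3}(G)$, so $G'$ is strictly smaller than $G$ in the well-order just defined; moreover $mad(G')\le mad(G)<4-\tfrac{1}{c}$. By minimality, there is a proper $L$-coloring $\phi$ of $(G')^{2}$, which I view as a partial coloring of $G^{2}$ with only $w$ uncolored.

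Next, I extend $\phi$ to $G^{2}$. The only $G^{2}$-edge among vertices of $V(G')$ possibly absent from $(G')^{2}$ is $uw'$ (coming from the length-$2$ path $u$-$w$-$w'$), so I first need $\phi(u)\ne \phi(w')$. If this already holds I keep $\phi$; otherwise I recolor $u$ with a color in $L(u)$ avoiding both the colors of its $(G')^{2}$-neighbors and $\phi(w')$. The forbidden set has size at most $d_{(G')^{2}}(u)+1\le d_{G^{2}}(u)\le \sum_{x\in N(u)}d(x)<2\Delta(G)=|L(u)|$, so such a recoloring is possible and leaves at least $2\Delta(G)-d_{G^{2}}(u)\ge 2$ valid choices for $\phi(u)$. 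Finally I color $w$ by picking a color from $L(w)$ distinct from those of $w$'s $G^{2}$-neighbors. Using $w\in N(u)\cap N(w')$, an inclusion–exclusion count gives
\[
d_{G^{2}}(w)\;\le\; d(u)+d(w')+1-|N(u)\cap N(w')|-2\,[uw'\in E(G)],
\]
which is already at most $2\Delta(G)-1<|L(w)|$ except in the degenerate configuration $d(u)=d(w')=\Delta(G)$, $uw'\notin E(G)$, and $N(u)\cap N(w')=\{w\}$.

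The hard part is this degenerate case, where the bound is exactly $2\Delta(G)$ and a naïve extension fails. My plan is to use the slack from the recoloring of $u$ above—which left at least two free colors—to pick $\phi(u)$ coinciding with $\phi(y)$ for some $y\in N(w')\setminus\{w\}$ that is not a $(G')^{2}$-neighbor of $u$. Such a $y$ exists in the degenerate configuration because $u$ and $w'$ share only $w$ as a common neighbor, forcing $u$ and most $y\in N(w')\setminus\{w\}$ to be at distance $>2$ in $G-w$. The resulting collision cuts the number of distinct forbidden colors at $w$ to at most $2\Delta(G)-1$, allowing the extension. Making this collision argument rigorous within the list-coloring framework—in particular, ensuring that $\phi(y)$ actually lies in $L(u)$—will be the most delicate point, and should follow from the structural restrictions imposed by $\sum_{x\in N(u)}d(x)<2\Delta(G)$, which forces $u$'s other neighbors to have average degree $<2$.
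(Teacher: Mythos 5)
There is a genuine gap, and it sits exactly where you flagged it: the degenerate case $d(u)=d(w')=\Delta(G)$ with $N(u)\cap N(w')=\{w\}$ and $uw'\notin E(G)$. Your collision argument is not salvageable as stated. In the list-coloring setting there is no reason why $\phi(y)$ should belong to $L(u)$; and even for ordinary coloring, after you recolor $u$ you only know that at least two colors of $L(u)$ remain admissible for $u$, while the vertices $y\in N(w')\setminus\{w\}$ at distance more than $2$ from $u$ carry some set of at most $\Delta(G)-1$ colors --- nothing forces these two sets to intersect. (Also, your claim that \emph{most} $y\in N(w')\setminus\{w\}$ lie at distance greater than $2$ from $u$ does not follow from $N(u)\cap N(w')=\{w\}$: such a $y$ may well share a neighbor with $u$ outside $N(w')$.) The hypothesis $\sum_{x\in N(u)}d(x)<2\Delta(G)$ does force $u$ to have many degree-$1$ neighbors in this case, but you never exploit that, and no earlier lemma excludes degree-$1$ vertices from the minimal counterexample.

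The paper avoids the degenerate case entirely by a different surgery and, crucially, a different coloring order. Instead of deleting the vertex $w$, delete only the edge $uw$ to form $H=G-uw$; by minimality $H^2$ is $2\Delta(G)$-choosable. Every edge of $G^2$ absent from $H^2$ is incident to $u$ or to $w$, so after \emph{uncoloring both} $u$ and $w$ one has a proper partial coloring of $G^2$. Now color $w$ first: its $G^2$-neighborhood has size at most $d(u)+d(w')\le 2\Delta(G)$, but $u$ is still uncolored, so at most $2\Delta(G)-1$ colors are forbidden. Then color $u$: the number of forbidden colors is at most $\sum_{x\in N(u)}d(x)<2\Delta(G)$, which is precisely the hypothesis. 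Uncoloring $u$ before treating $w$ is the one idea your write-up is missing; with it, no case analysis on $d(u)+d(w')$ is needed. If you insist on deleting the vertex $w$ instead, you would have to prove something extra in the extremal configuration, and as written that step does not go through.
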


\begin{proof} Suppose that $\displaystyle \sum_{x \in N(u)} d(x) < 2 \Delta(G)$.
Let $v$ be a neighbor of $u$ whose degree is 2.
Let $H=G-uv$.
The number of $3^{+}$-vertices of $H$ is not greater than that of $G$, and the number of edges of $H$ is less than that of $G$.  Thus, $\chi_{\ell}(H^2)\le 2 \Delta(H)$.
Note that $2\Delta(H)\le 2\Delta(G)$.
Now uncolor $u$ and $v$. Then the number of forbidden colors at $v$ is at most $2\Delta(G)-1$ and so we can assign a color of $v$.
And the number of forbidden colors at $u$ is at most $\sum_{x \in N(u)} d(x) <2\Delta(G)$, and so we can give a color to $u$. Thus $G^2$ is $2\Delta(G)$-choosable.  This is a contradiction.
\end{proof}

\begin{corollary}\label{nbr-coro}
Let $u$ be a vertex having a neighbor of degree 2. Then
\begin{itemize}
\item[(i)] if $d(u)\le \frac{2\Delta(G)}{3}$, then $u$ has at least one neighbor of degree at least $4$;
\item[(ii)] if $d(u) \le \frac{\Delta(G)}{3}$, then  $u$ has at least two neighbors of degree at least $4$;
\item[(iii)] a $2$-vertex is not adjacent to a $2$-vertex.
\end{itemize}
\end{corollary}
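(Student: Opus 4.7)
The plan is to derive all three parts of the corollary as short direct consequences of Lemma \ref{nbr-lemma}, each by bounding $\sum_{x \in N(u)} d(x)$ from above under a contrapositive assumption and showing this contradicts the lower bound $2\Delta(G)$ from the lemma. Throughout, I will use that the hypothesis $\Delta(G)\ge 14c-7$ with $c\ge 2$ ensures $\Delta(G)\ge 21$, which turns a few of the weak inequalities into strict ones.

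For (i), I would assume for contradiction that every neighbor of $u$ has degree at most $3$. The known degree-$2$ neighbor contributes $2$, and each of the remaining $d(u)-1$ neighbors contributes at most $3$, so $\sum_{x\in N(u)} d(x) \le 2 + 3(d(u)-1) = 3d(u)-1$. Combined with the hypothesis $d(u)\le \tfrac{2\Delta(G)}{3}$, this is at most $2\Delta(G)-1$, contradicting Lemma \ref{nbr-lemma}. Hence $u$ has some neighbor of degree at least $4$.

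For (ii), I would similarly assume $u$ has at most one neighbor of degree $\ge 4$; call it $v_1$ if it exists, and note $v_1$ cannot be the prescribed degree-$2$ neighbor. Then $v_1$ contributes at most $\Delta(G)$, the degree-$2$ neighbor contributes $2$, and the remaining $d(u)-2$ neighbors each contribute at most $3$, giving $\sum_{x\in N(u)} d(x) \le \Delta(G)+3d(u)-4$. Under $d(u)\le \tfrac{\Delta(G)}{3}$ this is at most $2\Delta(G)-4<2\Delta(G)$, again contradicting Lemma \ref{nbr-lemma}. The subcase where $u$ has no neighbor of degree $\ge 4$ is strictly easier and is already covered by (i) since $\tfrac{\Delta(G)}{3}\le \tfrac{2\Delta(G)}{3}$.

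Part (iii) then drops out: if a $2$-vertex $u$ were adjacent to a $2$-vertex $v$, then letting $w$ denote the other neighbor of $u$ we would have $\sum_{x\in N(u)} d(x)=d(v)+d(w)\le 2+\Delta(G)$, which is smaller than $2\Delta(G)$ once $\Delta(G)>2$, contradicting Lemma \ref{nbr-lemma}. There is essentially no main obstacle: the whole corollary amounts to three one-line counting arguments on top of Lemma \ref{nbr-lemma}, and the only care needed is to check that the hypothesis on $\Delta(G)$ makes the bounds strict.
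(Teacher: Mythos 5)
Your proof is correct and matches the paper's intent: the paper states Corollary \ref{nbr-coro} without proof, treating it as an immediate consequence of Lemma \ref{nbr-lemma}, and your three counting arguments (bounding $\sum_{x\in N(u)}d(x)$ by $3d(u)-1$, by $\Delta(G)+3d(u)-4$, and by $\Delta(G)+2$ respectively, then invoking $\Delta(G)\ge 14c-7\ge 21$) are exactly the intended derivation. No gaps.
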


\begin{lemma} \label{3-lemma}
Every $3$-vertex has a neighbor of degree   at least $4c$.
\end{lemma}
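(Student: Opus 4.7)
The plan is a straightforward vertex-deletion argument in the spirit of Lemma \ref{nbr-lemma}. Suppose for contradiction that some $3$-vertex $v$ has all three neighbors $x_1, x_2, x_3$ of degree at most $4c-1$ in $G$. I will delete $v$, use the minimality of $G$ to $L$-color the resulting smaller graph, and then show that enough colors remain to extend the coloring to $v$, contradicting that $G$ is a counterexample.

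Let $H = G - v$. Because $v$ is a $3^+$-vertex, $n_3(H) \leq n_3(G) - 1$, so $H$ is strictly smaller than $G$ in the ordering used for the induction. Moreover, each $x_i$ satisfies $d_G(x_i) \leq 4c - 1 < 14c - 7 \leq \Delta(G)$ (the first inequality uses $c \geq 2$), so no neighbor of $v$ attains $\Delta(G)$; hence $\Delta(H) = \Delta(G) \geq 14c - 7$, and trivially $mad(H) \leq mad(G) < 4 - \frac{1}{c}$. Thus $H$ satisfies the hypothesis of Theorem \ref{main-thm}, and by minimality of $G$, $\chi_\ell(H^2) \leq 2\Delta(H) = 2\Delta(G)$.

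Now let $L$ be any list assignment on $G$ with $|L(u)| \geq 2\Delta(G)$ for every vertex $u$. Restricting $L$ to $V(H)$ and applying the above, I can $L$-color $H^2$. The colors forbidden at $v$ in any extension come from vertices at distance at most $2$ from $v$ in $G$, which lie in $\{x_1,x_2,x_3\} \cup \bigcup_{i=1}^3 (N(x_i) \setminus \{v\})$, a set of cardinality at most $\sum_{i=1}^{3} d(x_i) \leq 3(4c-1) = 12c - 3$. Since
\[
2\Delta(G) - (12c - 3) \;\geq\; 2(14c - 7) - (12c - 3) \;=\; 16c - 11 \;>\; 0,
\]
a color in $L(v)$ remains for $v$, yielding an $L$-coloring of $G^2$ and the desired contradiction.

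The only delicate point, and hence the main ``obstacle,'' is verifying that the induction hypothesis applies to $H$ with the \emph{same} target bound $2\Delta(G)$ rather than $2\Delta(H)$. This is precisely why one needs $\Delta(H) = \Delta(G)$, which in turn is exactly what the assumption $d(x_i) \leq 4c-1$ combined with $\Delta(G) \geq 14c-7$ guarantees. Everything else is a direct degree count, and the numerical slack $16c-11 > 0$ is comfortable.
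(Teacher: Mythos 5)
There is a genuine gap, and it is exactly the point your deletion argument glosses over: a proper coloring of $H^2=(G-v)^2$ need not be a proper coloring of $G^2$ restricted to $V(G)\setminus\{v\}$. The three neighbors $x_1,x_2,x_3$ of $v$ are pairwise adjacent in $G^2$ (they are at distance $2$ through $v$), but after deleting $v$ they may be pairwise at distance greater than $2$ in $H$, so the inductively obtained list coloring of $H^2$ may assign two of them the same color. Your extension step only checks that a color remains for $v$; it never ensures that $x_1,x_2,x_3$ receive distinct colors, and you cannot simply recolor an $x_i$ at the end, since $x_i$ may have up to $(4c-1)\Delta(G)\gg 2\Delta(G)$ forbidden colors. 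So the ``comfortable numerical slack'' you compute does not address the actual obstacle.

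The paper's proof repairs precisely this defect: instead of taking $H=G-u$, it deletes $u$ and adds three new $2$-vertices $y_1,y_2,y_3$ with $N(y_1)=\{x_1,x_2\}$, $N(y_2)=\{x_2,x_3\}$, $N(y_3)=\{x_1,x_3\}$, so that $x_1,x_2,x_3$ remain pairwise at distance $2$ in $H$ and hence get distinct colors in any proper coloring of $H^2$. This gadget keeps $n_3(H)<n_3(G)$ (the $y_i$ are $2$-vertices and each $x_i$ stays a $3^+$-vertex) and keeps $\Delta(H)=\Delta(G)$, but since $H$ is no longer a subgraph of $G$, the hypothesis $mad(H)<4-\frac{1}{c}$ must be re-established; the paper does this with a separate potential-function argument ($\rho_{G'}(A)=(4c-1)|A|-2c\|A\|$). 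Your proof is missing both the gadget and, consequently, this verification. The rest of your computation (that $\Delta(H)=\Delta(G)$ and that $u$ has at most $12c-3<2\Delta(G)$ forbidden colors) matches the paper and is fine, but as written the argument does not prove the lemma.
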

\begin{proof}   
For any graph $G'$, we define a potential function $\rho_{G'}:2^{V(G')}\rightarrow \mathbb{Z}$ by
\[
\rho_{G'} (A) = (4c -1)|A| - 2c||A||.
\]
Note that $\rho_{G'}(A) \geq 1$ for every subset $A \subset V(G')$ is equivalent to $mad(G') < 4 - \frac{1}{c}$.

Let $u$ be a $3$-vertex of $G$, and let $N(u) = \{x_1, x_2, x_3 \}$.
Suppose that $max \{d(x_1), d(x_2), d(x_3)\} \leq 4c -1$.
Let $H$ be the graph obtained by deleting the vertex $u$ and adding three vertices $y_1, y_2, y_3$ such that $N(y_1) = \{x_1, x_2\}$,  $N(y_2) = \{x_2, x_3\}$, and $N(y_3) = \{x_1, x_3\}$.

Now we will show that $\rho_H (S) \geq 1$ for every $S \subset V(H)$.
Suppose that  there exists $S\subset V(H)$ such that $\rho_H (S)\le 0$. We take a smallest such $S$.
If $S \cap \{y_1, y_2, y_3\} = \emptyset$, then $\rho_H (S) = \rho_G (S) \geq 1$.
Thus $S \cap \{y_1, y_2, y_3\} \neq \emptyset$.
If a vertex $y$ in $S\cap \{y_1,y_2,y_3\}$ is a $1^-$ vertex, then $\rho_H (S) > \rho_H(S\setminus\{y\})$, a contradiction to the minimality of $|S|$.
Thus, any vertex in $S\cap \{y_1,y_2,y_3\}$ is a  $2$-vertex.

Let $S'= S \setminus \{y_1, y_2, y_3\} $ and $|S\cap \{y_1,y_2,y_3\}|=\alpha$.
If $\alpha=1$ then $\rho_H (S) = \rho_G (S' \cup \{u\}) \geq 1$.
If  $\alpha\ge 2$, then  $\{x_1,x_2,x_3\} \subset S$ and so
\begin{eqnarray*}
 \rho_H(S) &=& \rho_H(S') + (4c-1)\alpha - 4c\alpha \\&=& \rho_G(S') -\alpha\\
&=&\rho_G(S'\cup\{u\}) -(4c-1) +6c -\alpha\\&=& \rho_G(S'\cup\{u\}) + 2c-\alpha+1\ge 1,
\end{eqnarray*}
where the last inequality is from the fact that $\rho_G(S'\cup\{u\})\ge1$ and $\alpha\le 3$.
Hence $\rho_H (S) \geq 1$ for every $S \subset V(H)$.

Note that each $x_i$ has degree in $H$ at least 3 by Lemma~\ref{nbr-lemma}.
Hence the number $3^+$-vertices of $H$ is smaller than the number of $3^+$-vertices of $G$.
Thus 
by the minimality of $G$, we have
$\chi_{\ell} (H^2) \leq 2 \Delta(H)$.
Since the degrees of $x_1$, $x_2$, $x_3$ in $G$ are at most $4c-1$ and $\Delta(G)\ge 14c-7$,
$\Delta(H)=\Delta(G)$. Thus $\chi_\ell(H^2) \leq 2 \Delta(G)$.
Now, since the number of 2-distance neighbors of $u$ is at most $12c < 2 \Delta(G)$, the number of forbidden colors at  $u$ is less than $2\Delta(G)$.
Thus $G^2$ is $2 \Delta(G)$-choosable.  This is a contradiction.  This completes the proof of Lemma \ref{3-lemma}.
\end{proof}

\bigskip
\noindent{\bf Discharging Rules} \\

\noindent {\bf R1}: If $d(u) \geq 8c -2$, then $u$ sends $1 - \frac{1}{2c}$ to  each of its neighbors. \\

\noindent {\bf R2}: If $4c \leq d(u) < 8c -2$, $u$ sends $1 - \frac{1}{2c}$ to each of neighbors of degree 2, and sends $1 - \frac{1}{c}$ to each of neighbors of degree 3.  \\

\noindent {\bf R3}: If $4 \leq d(u) < 4c $ and $u$ has exactly one neighbor of degree at least $4$, then $u$
does not send any charge to its neighbors.  \\

\noindent {\bf R4}: If $4 \leq d(u) < 4c $ and $u$ has at least two neighbors of degree at least 4,  then $u$ sends
$1 - \frac{1}{2c}$ to each of its neighbors of degree 2.  \\

\noindent {\bf R5}: If a $3$-vertex $u$ has two neighbors of degree at least $8c-2$ and one neighbor of degree 2, then $u$ sends $1 - \frac{1}{2c}$ to its neighbor  whose degree is 2.

\bigskip

\bigskip

Let $d^*(u)$ be the new charge after discharging.  We will show  that $d^*(u) \geq 4 - \frac{1}{c}$ for all $u$.  Note that $\Delta(G) \geq 14c -7$.

\bigskip
(1) When $d(u) \geq 8c -2$,
\[
d^*(u) \geq d(u) - d(u)\left(1 - \frac{1}{2c}\right) = \frac{d(u)}{2c} \geq 4 - \frac{1}{c}.
\]

\bigskip
(2) If $4c\leq d(u) \leq 8c -3$ and $u$ has no neighbor of degree 2,
then \[   d^*(u) \geq d(u) - d(u)\left(1 - \frac{1}{c}\right) =   \frac{d(u)}{c}   \geq 4 - \frac{1}{c} .\]

\bigskip
(3) Suppose that  $4c\leq d(u) \leq 8c -3$ and $u$ is adjacent to a 2-vertex.
Note that by (i) of Corollary~\ref{nbr-coro}, $u$ is adjacent to at least one $4^+$-vertex.
\begin{itemize}
\item If $6c-1 \leq d(u) \leq 8c -3$, then
\[
d^*(u) \geq d(u) - (d(u) -1)\left(1 - \frac{1}{2c}\right) = 1 + \frac{d(u)}{2c} - \frac{1}{2c} \geq 4 - \frac{1}{c},
\]
since $u$ is adjacent to at least one $4^+$-vertex.
\item If $4c \leq d(u) \leq 6c-2$ and $u$ has exactly one neighbor $z$ of degree at least 4,
then
\[\displaystyle \sum_{x \in N(u)} d(x) \le  d(z)+3 \cdot (d(u)-2)+2, \]
and by Lemma~\ref{nbr-lemma},
\[ 2 \Delta(G)\le \sum_{x \in N(u)} d(x)    \le d(z)+3\cdot (d(u)-2)+2  .\]
Thus
\[ d(z) \ge  2 \Delta(G) -3\cdot(d(u)-2)-2 \ge2\cdot(14c-7) -3\cdot(6c-4)-2\ge 8c-2,\]
which implies that $u$ receives charge $1-\frac{1}{2c}$ from $z$. Thus
\[
d^*(u) \geq d(u) - (d(u) -1)\left(1 - \frac{1}{2c}\right) + 1-\frac{1}{2c} = 2 + \frac{d(u)}{2c} - \frac{1}{c} \geq 4 - \frac{1}{c}.
\]
\item If $4c \leq d(u) \leq 6c-2$ and $u$ is adjacent to at least two $4^+$-vertices, then
\[
d^*(u) \ge d(u) - (d(u) -2)\left(1 - \frac{1}{2c}\right) = 2 + \frac{d(u)}{2c} - \frac{1}{c} \geq 4 - \frac{1}{c}.
\]
\end{itemize}
Thus
$d^*(u) \geq  4 - \frac{1}{c}$.

\bigskip

(4) Suppose that $2c+1 \leq d(u) < 4c $.
If $u$ has no neighbor of degree 2, then $u$ does not send any charge to others.
Next, consider the case when $u$ has  a neighbor of degree 2.
By (ii) of Corollary~\ref{nbr-coro}, $u$ is adjacent to at least two $4^{+}$-vertices.
\begin{itemize}
\item Suppose that $u$ has exactly two neighbors of degree at least 4, say $z_1$ and $z_2$.
Then by Lemma~\ref{nbr-lemma},
\[ d(z_1) + d(z_2) + 3\cdot (d(u)-2) \geq 2 \Delta(G).
\]  Note that
\[
d(z_1) + d(z_2) \geq  2 \Delta(G) - 3d(u) +6  \geq 2\cdot (14c-7)- 3\cdot(4c-1) + 6 =16c-5=2\cdot(8c-2)-1.
\]
  Thus at least one of $d(z_1)$ and $d(z_2)$ is at least $8c-2$, implies that
  $u$ receives charge at least $1 - \frac{1}{2c}$ from $z_1$ and $z_2$.  Thus
\[
d^*(u) \geq d(u) - (d(u)-2)\left(1 - \frac{1}{2c}\right) +  1 - \frac{1}{2c}
= \frac{d(u)}{2c} + 3 - \frac{3}{2c} \geq 4 - \frac{1}{c}.
\]
\item If $u$ is adjacent to at least three $4^{+}$-vertices, then
\[
d^*(u) \geq d(u) - (d(u)-3)\left(1 - \frac{1}{2c}\right)
= \frac{d(u)}{2c} + 3 - \frac{3}{2c} \geq 4 - \frac{1}{c}.
\]
\end{itemize}

\bigskip

(5) Suppose that $4 \leq d(u) < 2c + 1$.
If  $u$ has no neighbor of degree 2,
then $u$ does not send any charge to others.
Consider the case when $u$ has  a neighbor of degree 2.
If $u$ has at most $(d(u)-4)$ neighbors of degree 2, then
\[
d^*(u) \geq d(u) - (d(u)-4)\left(1 - \frac{1}{2c}\right)
= \frac{d(u)}{2c} + 4 - \frac{2}{c} \geq 4 - \frac{1}{c}.
\]
Suppose that $u$ has at least $(d(u)-3)$ neighbors of degree 2.
Let $z_1$, $z_2$ and $z_3$ be the other neighbors.
By Lemma~\ref{nbr-lemma}, since $u$ has a neighbor of degree 2,
\[ d(z_1) + d(z_2) + d(z_3)+2\cdot(d(u)-3) \geq 2 \Delta(G).\]
Note that
\[
d(z_1) + d(z_2) + d(z_3) \geq  2 \Delta(G) - 2d(u) +6  \geq 2\cdot (14c-7) - 2\cdot2c + 6 \geq 3 \cdot (8c-2)-2.
\]
Thus we can conclude that at least one of $d(z_1)$, $d(z_2)$, $d(z_3)$ is at least $8c-2$, and so $u$ receives charge at least $1 - \frac{1}{2c}$ from $z_1, z_2, z_3$.  Thus
\[
d^*(u) \geq d(u) - (d(u)-3)\left(1 - \frac{1}{2c}\right) + 1 - \frac{1}{2c}
= \frac{d(u)}{2c} + 4 - \frac{4}{2c} \geq 4.
\]

\bigskip
(6)  When $d(u) = 3$, by Lemma~\ref{3-lemma},
$u$ has at least one neighbor of degree at least $4c$. Thus $u$ receives charge at least $1 - \frac{1}{c}$ from its neighbors. Even though we consider  {\bf R5},  we have  $d^*(u) \geq 4 -\frac{1}{c}$.

\bigskip
(7) Suppose that $d(u) = 2$.
We will show that $u$ receives  $1 - \frac{1}{2c}$ from both neighbors.
Let $x$ be a neighbor of $u$.
Suppose that $d(x)\le 3$. Then $d(x)=3$ by (iii) of Corollary~\ref{nbr-coro}. By Lemma~\ref{nbr-lemma},  each neighbor of $x$ other than $u$ has degree at least $\Delta(G)-2$, which implies that $u$ receives  charge $1 - \frac{1}{2c}$ from $x$ by {\bf R5}.
Suppose that $d(x)\ge 4$.
If $4 \leq d(x) < 4c $ and $x$ has exactly one neighbor of degree at least 4, the it violates (ii) of Corollary~\ref{nbr-coro}.
Thus the case of {\bf R3} does not happen to $x$. That is, $x$ must send $1 - \frac{1}{2c}$ to $u$.
Then $d^*(u) \geq 4 -\frac{1}{c}$.


\section{Remark on a condition for $\Delta(G)$ to be $\chi(G^2) \le 2 \Delta(G)$} \label{section-remark}

Given a positive integer $c\ge 2$, let $h(c)$ be the smallest value such that $\chi(G^2) \leq 2 \Delta(G)$ whenever $G$ is a graph with $mad(G) < 4 - \frac{1}{c}$ and $\Delta(G) \geq h(c)$. Theorem \ref{main-thm} shows that $h(c) \leq 14c-7$.
In the following, we will see that $h(c)\ge 2c+2$ by showing that for any integer $c\ge 2$, there is a graph $G$ such that $mad(G)<4-\frac{1}{c}$, $\Delta(G) = 2c +1$, and $\chi(G^2) \geq 2 \Delta(G) +1$.
Hence $2c+2 \le  h(c) \leq 14c -7$. Thus,
it would be interesting to find  the optimal value of $h(c)$ or to reduce the gap in $2c+2 \le  h(c) \leq 14c -7$.

Now, given a positive integer $c\ge 2$,  we give a graph $G$ such that $mad(G)<4-\frac{1}{c}$, $\Delta(G) = 2c +1$, and $\chi(G^2) \geq 2 \Delta(G) +1$.
Let consider $G_n$ in  Section~\ref{sec2} when $n = 2c$, and then let \[
G = G_{n} -  \left(\{x_{12}x_{i(i+1)} : 2 \leq i \leq n \} \cup \{ x_{12}x_{n1} \} \right).\]
Then $|V(G)| = 4c^2 + 4c +2$ and $|E(G)| = 8c^2 + 6c$.
Therefore, $\Delta(G)=n+1$ and $\{u,v,x_{11}\}\cup S\cup T$ is a clique in $G^2$ with $2n+3$ vertices.
Thus $\chi(G^2) \geq 2\Delta(G) +1$.

\begin{claim}
$mad (G) < 4 - \frac{1}{c}$.
\end{claim}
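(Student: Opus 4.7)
My plan is to mirror the strategy of Claim~\ref{claim-construction} using the finer potential $\rho_G(A)=(4c-1)|A|-2c\|A\|$, so that $\rho_G(A)\ge 1$ for every nonempty $A\subseteq V(G)$ is equivalent to $mad(G)<4-\tfrac{1}{c}$. Assume for contradiction that $\rho_G(A)\le 0$ for some $A$, and choose such an $A$ to minimise $\rho_G(A)$ first and $|A|$ second. Minimality gives (i) $d_{G[A]}(v)\ge 2$ for every $v\in A$, since deleting a vertex of degree $\le 1$ changes $\rho_G$ by at most $-(4c-1)+2c=1-2c<0$; and (ii) $|N(w)\cap A|\le 1$ for every $w\notin A$, since adding a vertex with $\ge 2$ neighbours in $A$ changes $\rho_G$ by $(4c-1)-4c=-1$. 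Read against the degree sequence of $G$, these conditions force $u_i,v_j\in A$ for every non-diagonal $x_{ij}\in A$, and at least two of $\{u_i,v_i,x_{11}\}$ to lie in $A$ for every $x_{ii}\in A$ with $i\ge 2$.

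The first body step is a short case analysis showing $\{u,v,x_{11}\}\subseteq A$. If $S\cap A=\emptyset$, then for each $v_j\in A$ condition (i) forces $x_{jj}\in A$, which in turn forces $x_{11}\in A$; $A$ is pinned down to $\{v,x_{11}\}$ together with these $v_j$ and $x_{jj}$, and a direct computation gives $\rho_G(A)\ge 10c-5>0$. The case $T\cap A=\emptyset$ is symmetric. If $u\notin A$ then (ii) at $u$ and at $v$ force $|A\cap S|=|A\cap T|=1$, and tracing the degree-$2$ requirement at the unique $u_{i_0}\in A$ through at most six vertices gives $\rho_G(A)\ge 10c-5$; the case $v\notin A$ is symmetric. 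Finally, if $x_{11}\notin A$ then (ii) at $x_{11}$ leaves at most one of $\{u_1,v_1\}\cup\{x_{ii}:i\ge 2\}$ inside $A$, and each of the three resulting subcases yields $\rho_G(A)\ge 6c-3+(2c-1)(s+t)-st$, which is positive for every $s,t\in[0,2c]$.

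Once $\{u,v,x_{11}\}\subseteq A$, I would decompose $A=(A\cap C)\sqcup X_W\sqcup X_Y$ with $C=\{u,v,x_{11}\}\cup S\cup T$, $X_W=A\cap\{x_{22},\ldots,x_{nn}\}$ and $X_Y=A\cap\{x_{ij}:i\ne j\}$, and further split $X_W=X_W^{1}\sqcup X_W^{2}$ according to whether exactly one or both of $u_i,v_i$ lie in $A$. Since $x_{11}\in A$, $d_{G[A]}(x_{ii})=2$ on $X_W^{1}$, $d_{G[A]}(x_{ii})=3$ on $X_W^{2}$ and $d_{G[A]}(x_{ij})=2$ throughout $X_Y$, so
\[
\rho_G(A)=\rho_G(A\cap C)-|X_W^{1}|-(2c+1)|X_W^{2}|-|X_Y|.
\]
Writing $s=|A\cap S|$, $t=|A\cap T|$ and $r=|\{i:u_i\in A,\,v_i\in A\}|$, the elementary bounds $|X_W^{1}|\le s+t-2r$, $|X_W^{2}|\le r$, $|X_Y|\le st-r$ and $\rho_G(A\cap C)\ge 6c-3+(2c-1)(s+t)$ simplify to
\[
\rho_G(A)\ \ge\ 6c-3+(2c-2)(s+t-r)-st;
\]
applying $r\le\tfrac{s+t}{2}$ and $st\le\tfrac{(s+t)^2}{4}$ gives $\rho_G(A)\ge 6c-3+(c-1)p-\tfrac14 p^2$, where $p=s+t\in[0,4c]$. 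The minimum of this concave parabola on $[0,4c]$ is $2c-3$, attained at $p=4c$; since $c\ge 2$ this is $\ge 1$, contradicting $\rho_G(A)\le 0$. The main hurdle is this last estimate: the three bounds on $|X_W^{1}|,|X_W^{2}|,|X_Y|$ are individually loose enough to allow $\rho_G(A)$ to look negative, and it is precisely their coupled dependence on the diagonal-index count $r$ that rigidifies the bound and yields the sharp constant $2c-3$.
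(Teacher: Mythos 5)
Your proof is correct, and its skeleton coincides with the paper's: the potential $\rho^*(A)=(4c-1)|A|-2c||A||$, a counterexample chosen to minimise $\rho^*$ and then $|A|$, minimum degree $2$ in $G[A]$, and the fact that every vertex outside $A$ has at most one neighbour in $A$. The endgame is genuinely different, though. The paper invokes the biconditional $x_{ij}\in A\Leftrightarrow\{u_i,v_j\}\subset A$ (stated only for $i\ne j$), writes $|A\cap X|=st$, bounds $||A||$ directly, and evaluates the resulting bilinear expression at $s,t\le 2c$; you instead first force $\{u,v,x_{11}\}\subseteq A$ through several preliminary cases, split the diagonal vertices $x_{ii}$ according to whether one or both of $u_i,v_i$ lie in $A$, peel off $X\setminus\{x_{11}\}$ via the exact potential-difference identity (the device of Claim~\ref{claim-construction}), and finish with a concave quadratic in $p=s+t$ whose minimum on $[0,4c]$ is $2c-3\ge 1$. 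What your route buys is precision exactly where the paper is terse: a diagonal $x_{ii}$ with $i\ge 2$ can lie in $A$ with only one of $u_i,v_i$ present, its second $A$-neighbour being $x_{11}$, so $|A\cap X|$ need not equal $st$; your $X_W^1/X_W^2$ split and the coefficient $(2c+1)$ on $X_W^2$ handle this cleanly. The cost is the extra case analysis ($S\cap A=\emptyset$, $u\notin A$, $x_{11}\notin A$), which you only sketch, but each of those cases pins $A$ down to a small or explicitly boundable configuration and the constants you quote ($10c-5$, $6c-3+(2c-1)(s+t)-st$, and the final $2c-3$) all check out.
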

\begin{proof}[Proof of Claim]
Define a potential function $\rho^*_G (A) = (4c -1)|A| - 2c||A||$ for $A \subset V(G)$.
Note that $mad (G) < 4 - \frac{1}{c}$ is equivalent to  $\rho^*_G(A)\ge 1$ for all $A\subset V(G)$.

Now, we will show that $\rho^*_G(A)\ge 1$ for all $A\subset V(G)$.
Suppose that there is $A\subset V(G)$ such that  $\rho^*_G(A)\le 0$, and take such $A$ with minimum value $\rho^*(A)$. If $G[A]$ contains a $1^{-}$-vertex $v$ then $\rho^*_G(A\setminus\{v\})<\rho^*_G(A)$, which is a contradiction to the minimality of $\rho^*(A)$. Thus $G[A]$ does not have a $1^{-}$-vertex.
If $a\not\in A$ and $a$ has at least two neighbors in $A$, then $\rho^*(A\cup\{a\})<\rho^*(A)$, a contradiction  to the minimality of $\rho^*(A)$.
Therefore, if $a \notin A$, then $a$ has at most one neighbor in $A$.
Thus for $i,j$ such that $i\neq j$,
\begin{eqnarray}\label{eqij}
x_{ij}\in A  \Longleftrightarrow \{u_i,v_j\}\subset A.
\end{eqnarray}
Without loss of generality, we may assume that $|S\cap A| \leq |T\cap A|$.
From \eqref{eqij}, it is easy to check that if $|A\cap S|\le 1$ or $|A \cap T| \leq 1$, then $\rho^*(A)\ge 1$. 
Thus we can assume that $|T\cap A| \geq  |S \cap A|\ge 2$,  and so $\{u,v\} \in A$.

For simplicity, let $s=|S\cap A|$ and $t=|T\cap A|$.  Then $s \leq t$.
By~\eqref{eqij}, $|A\cap X|=|S\cap A|\cdot |T \cap A|=st$.  Thus we have that
$|A|\ge st+s+t+2$.
On the other hand,
\begin{eqnarray*}||A||\leq 2st+s+t + 1 + (s-1) =2 \cdot (st+s+t+2)-(t+4).
\end{eqnarray*}
Thus,
\begin{eqnarray*}
\rho^*(A)&=&(4c-1)|A|-2c||A|| \\
&\ge &(4c-1)(st+s+t+2)-2c\cdot \left( 2 \cdot (st+s+t+2)-(t+4) \right)\\
&=& (4c-1)(st+s+t+2)-4c \cdot(st+s+t+2)+2c \cdot (t+4)\\
&=& -(st+s+t+2)+2c(t+4)\\
&=& -st-s-t-2+2tc+8c\\
&=& (2ct-st)+(2c-s)+(2c-t)+(4c-2) \ge 1,
\end{eqnarray*}
where the last equality is from the fact that $2c \ge \max\{s,t\}$.
This is a contradiction to the assumption that
$\rho^*(A)\le 0$.  Thus $\rho^*(A) \ge 1$ for every subset $A \subset V(G)$.
\end{proof}


\section*{Appendix: Counterexamples to Conjecture~\ref{conj-k}}

Fix an integer $k\ge 3$ and let $n$ be a  prime with $n\ge k^2-k$.
We will define a graph $G=G_{k,n}$ such that $\Delta(G)=k+n-1$, $mad(G^2)<2k$, and $G^2$ contains a clique of size $k\Delta(G)+k$.
The idea is exactly same as Construction \ref{construction-one} in Section 2.

For $\ell \in [k-2]$,  we define a Latin square $L_{\ell}$ of order $n$ by
\begin{eqnarray*} \label{eq:Latin}
 L_{\ell}(i, j)= j+\ell (i-1) \pmod{n}, \quad \mbox{ for } (i, j) \in [n] \times [n].
\end{eqnarray*}
That is, the $(i, j)$-entry of the Latin square of $L_{\ell}$ is $L_{\ell}(i, j)$.
(See page 252 in \cite{Van-Lint} for detail.)

\medskip

\begin{construction} \label{construction-general}\rm
For $i\in [k]$,
let $U_i=\{u_{i,1},u_{i,2},\ldots,u_{i,n}\}$. Let  $U=\{u_1,u_2,\ldots, u_k\}$ and $X=\{x_{i,j} \mid (i,j) \in [n] \times [n] \}$.
Define
\begin{eqnarray*}
  V(G)&=& U \cup \left(\bigcup_{i=1}^{n} U_i\right) \cup X \\
  E(G)&=& \{u_iu_j \mid 1\le i< j\le k \} \cup \left(\bigcup_{i=1}^{k} \{u_ix\mid x \in U_i \}\right) \\&&\cup
  \left( \bigcup_{i=1}^{n} \bigcup_{j=1}^{n}  \left\{ x_{i,j}y\mid y\in \{u_{1,i},u_{2,j},u_{3,L_{1}(i,j)}, u_{4,L_{2}(i,j)},\ldots,u_{k,L_{k-2}(i,j)}\}  \right\}\right) \\
  &&\cup  \left( \bigcup_{r=0}^{k^2-k-1} \bigcup_{i=2}^{n} \{ x_{1,1+r}  x_{i,i+r}  \}  \right)
\end{eqnarray*}
where the subscripts of $x_{i,j}$ are computed by modulo $n$.
\end{construction}

Then we have the following observations.
\begin{itemize}
\item For $u_i\in U$,  $d(u_i)=n+k-1$ and for $u_{i,j}\in U_i$, $d(u_{i,j})=n+1.$
\item For $x_{i,j}\in X$,
\[d(x_{i,j})=\left\{\begin{array}{ll} 2k-1 & \text{if } i=1 \\
  k+1 & \text{if }j=i+r  \mbox{ for some  } 0\le r \le  k^2-k -1 \text{ and }i\ge 2 \\
  k & \text{otherwise }
\end{array}\right.\]
where  the subscript of $x_{i,j}$ are computed by modulo $n$.
\end{itemize}
Therefore $\Delta(G)=n+k-1$.

\begin{claim}
$\chi(G^2) \geq k\Delta(G) +k$.
\end{claim}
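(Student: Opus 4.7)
The plan is to exhibit an explicit clique in $G^2$ of size $k\Delta(G)+k$; since $\chi(G^2)$ is at least the clique number of $G^2$, this proves the claim. Using $\Delta(G)=n+k-1$, the target size is $kn+k^2$, and the candidate is
\[
K \;:=\; U\;\cup\;\bigcup_{i=1}^{k}U_i\;\cup\;Y,\qquad Y\;:=\;\{x_{1,1+r}\,:\,0\le r\le k^2-k-1\},
\]
which has $|K|=k+kn+(k^2-k)=kn+k^2$; distinctness of the elements of $Y$ uses $n\ge k^2-k$. What remains is to verify that every two vertices of $K$ are at distance at most $2$ in $G$.

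The easy cases are: pairs inside $U$ are directly adjacent in $G$; pairs inside a single $U_i$, and pairs of the form $u_\ell\in U$ with $u_{i,j}\in U_i$, share $u_i$; and pairs inside $Y$ share $u_{1,1}$, because $L_s(1,1+r)=1+r$ shows that every $x_{1,1+r}$ is adjacent to $u_{1,1}$. For cross-pairs between $U$ and $Y$, the same identity shows that $u_\ell$ and $x_{1,1+r}$ share $u_{\ell,1+r}$ when $\ell\ge 2$, while $u_1$ shares $u_{1,1}$ with $x_{1,1+r}$.

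The substantive cases rely on the primality of $n$. For $u_{\ell,p}\in U_\ell$ and $u_{\ell',q}\in U_{\ell'}$ with $\ell\ne\ell'$, we seek $x_{a,b}$ adjacent to both; in the generic sub-case $\ell,\ell'\ge 3$, the conditions $b+(\ell-2)(a-1)\equiv p$ and $b+(\ell'-2)(a-1)\equiv q\pmod n$ combine to $(\ell-\ell')(a-1)\equiv p-q\pmod n$, which is solvable because $|\ell-\ell'|<k\le n$ and $n$ is prime; the sub-cases with $\{\ell,\ell'\}\cap\{1,2\}\ne\emptyset$ are easier (e.g.\ $x_{p,q}$ itself witnesses adjacency when $\{\ell,\ell'\}=\{1,2\}$). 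For $u_{\ell,j}$ and $x_{1,1+r}$ with $j\ne 1+r$, a common $X$-neighbor of the form $x_{a,a+r}$ is sought, exploiting the extra edges $x_{1,1+r}x_{i,i+r}$ with $i\ge 2$; for $\ell\ge 3$ the equation $a+r+(\ell-2)(a-1)\equiv j\pmod n$ yields $a\equiv(j-r+\ell-2)(\ell-1)^{-1}\pmod n$, and the forbidden value $a\equiv 1\pmod n$ occurs exactly when $j\equiv 1+r\pmod n$, i.e.\ in the already-handled directly-adjacent sub-case. The cases $\ell=1$ (common neighbor $x_{j,j+r}$ when $j\ne 1$) and $\ell=2$ (common neighbor $x_{j-r,j}$ when $j\ne 1+r$) are analogous.

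The main obstacle is organising the case analysis so that the forbidden index values arising from the linear congruences line up exactly with the directly-adjacent sub-cases, leaving no pair uncovered. Once all cases are verified, $K$ is a clique in $G^2$, and $\chi(G^2)\ge|K|=k\Delta(G)+k$, establishing the claim.
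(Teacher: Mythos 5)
Your proposal is correct and takes essentially the same route as the paper: both exhibit the clique $U\cup U_1\cup\cdots\cup U_k\cup\{x_{1,1},\ldots,x_{1,k^2-k}\}$ in $G^2$, and your explicit linear congruences modulo the prime $n$ are exactly the orthogonality-of-Latin-squares and diagonal-covering facts that the paper invokes (the paper phrases the $Y$--$U_i$ step as $N_G(x_{1,1+r})\cup N_G(x_{2,2+r})\cup\cdots\cup N_G(x_{n,n+r})\supseteq U_1\cup\cdots\cup U_k$, which is your "solve for $a$, with $a=1$ forbidden only in the directly adjacent sub-case" in covering form). One cosmetic remark: the adjacency of each $x_{1,1+r}$ to $u_{1,1}$ follows directly from the construction rule $x_{i,j}\sim u_{1,i}$ with $i=1$, not from the identity $L_s(1,1+r)=1+r$, which instead gives the adjacency to $u_{s+2,1+r}$.
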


\begin{proof} We will show that
$\{x_{1,1}, x_{1,2}, x_{1,3},\ldots,x_{1,k^2-k}\}\cup U\cup U_1\cup \cdots \cup U_{k}$ is a clique in $G^2$.
From the orthogonality of Latin squares, we know that   $u_{i,j}$ and $u_{i',j'}$ are adjacent in $G^2$ if $i\neq i'$.
For each $i\in [k]$, since   $u_{i,j}$ and $u_{i,j'}$ share a neighbor $u_i$, they are adjacent in $G^2$.
In addition, $u_i$ and $u_{i',j}$ are adjacent in $G^2$, since they share a neighbor $u_{i'}$.
Therefore, $U\cup U_1\cup \cdots \cup U_{k}$ is a clique in $G^2$.

Note that the vertices in $\{x_{1,1}, x_{1,2}, \ldots,x_{1,k^2-k}\}$ share a neighbor $u_{1,1}$, and so
they form a clique in $G^2$. Furthermore, each vertex in $U$  is adjacent to each vertex in $X$ in $G^2$ since they share a neighbor in $U_1\cup \cdots \cup U_{k}$.
Thus, it remains to show that for each integer $r$ such that $0\le r \le  k^2-k -1$,
$x_{1,1+r}$ is adjacent to each vertex in $U_1\cup \cdots \cup U_{k}$.

Let $r$ be an integer with  $0\le r \le  k^2-k -1$.
Since for $i\in [n]$,
\[ N_G(x_{i,i+r}) \supset \{u_{1,i},u_{2,i+r},u_{3,L_{1}(i,i+r)}, u_{4,L_{2}(i,i+r)},\ldots,u_{k,L_{k-2}(i,i+r)}\}.\]
Thus  $N_G(x_{1,1+r})\cup N_G(x_{2,2+r}) \cup \cdots\cup N_G(x_{n,n+r})$ contains
\begin{eqnarray*}
&&\{u_{1,1},u_{2,1+r},u_{3,L_{1}(1,1+r)}, u_{4,L_{2}(1,1+r)},\ldots,u_{k,L_{k-2}(1,1+r)}\}\\
&\cup&  \{u_{1,2},u_{2,2+r},u_{3,L_{1}(2,2+r)}, u_{4,L_{2}(2,2+r)},\ldots,u_{k,L_{k-2}(2,2+r)}\}\\
&&\vdots\\
&\cup&
\{u_{1,n},u_{2,n+r},u_{3,L_{1}(n,n+r)}, u_{4,L_{2}(n,n+r)},\ldots,u_{k,L_{k-2}(n,n+r)}\}.
\end{eqnarray*}
Since for each $\ell\in [k-2]$,
\[ \{ L_{\ell}(1,1+r) ,L_{\ell}(2,2+r),\ldots ,L_{\ell}(n,n+r) \}=[n],\]
we can conclude that
\[ N_G(x_{1,1+r})\cup N_G(x_{2,2+r}) \cup \cdots\cup N_G(x_{n,n+r}) \supset U_1\cup \cdots \cup U_{k}. \]
Since $x_{1,1+r}$ is adjacent to every vertex in $\{x_{2,2+r},x_{3,3+r}, \ldots, x_{n,n+r}\}$,
$x_{1,1+r}$ is adjacent to each vertex in  $U_1\cup \cdots \cup U_{k}$ in $G^2$.

Consequently,
$\{x_{1,1}, x_{1,2}, x_{1,3},\ldots,x_{1,k^2-k}\}\cup U\cup U_1\cup \cdots \cup U_{k}$ is a clique in $G^2$ with $kn+k+(k^2-k)=k\Delta(G)+k$ vertices.
Thus $\chi(G^2) \geq k\Delta(G) +k$.\end{proof}

Next, we will show that $mad(G)<2k$.
Define a potential function $\rho_G:2^{V(G)}\rightarrow \mathbb{Z}$ by for $A\subset V(G)$,
\[
\rho_G (A) = k|A| - ||A||.
\]
Note that $\rho_G(A) \geq 1$ for every $A \subset V(G)$ is equivalent to $mad(G) <2k$.

\bigskip

Now, we will show that $\rho_G(A)\ge 1$ for all $A\subset V(G)$.

\begin{claim} \label{mad-2k}
For all $A\subset V(G)$, $\rho_G(A)\ge 1$.\end{claim}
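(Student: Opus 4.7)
I plan to mimic the minimum-counterexample template used in Claim~\ref{claim-construction}. Suppose for contradiction there is a nonempty $A \subseteq V(G)$ with $\rho_G(A) \leq 0$; choose such $A$ to minimize $\rho_G(A)$ and then $|A|$. By the same local moves as in Claim~\ref{claim-construction}, this minimality forces (i) $d_{G[A]}(v) \geq k+1$ for every $v \in A$ (else $\rho_G(A\setminus\{v\}) \leq \rho_G(A)$) and (ii) every $v \notin A$ has at most $k$ neighbors in $A$ (else $\rho_G(A\cup\{v\}) < \rho_G(A)$). Since (i) rules out any $v \in A$ with $d_G(v) \leq k$, the intersection $X \cap A$ decomposes as $X'_A \sqcup X^*_A$, where $X'_A = \{x_{i,i+r} \in A : i \geq 2,\ r \in \{0,\ldots,k^2-k-1\}\}$ (these are the $(k{+}1)$-vertices) and $X^*_A = \{x_{1,1+r} \in A\}$ (the high-degree ones). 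Moreover (i) forces every $x \in X'_A$ to satisfy $d_{G[A]}(x) = k+1$, so all $k$ of its $U_t$-partners together with its unique $X$-partner $x_{1,1+r}$ lie in $A$.

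Write $a = |U \cap A|$, $b_t = |U_t \cap A|$, $c' = |X'_A|$, $c^* = |X^*_A|$. A direct edge count gives $\|A\| = \binom{a}{2} + \sum_{t:\, u_t \in A} b_t + e_{VX} + c'$, where the last term records that the only edges inside $G[X_A]$ are the edges $x_{1,1+r}x_{i,i+r}$, indexed bijectively by $X'_A$. Bounding $e_{VX} \leq k(c'+c^*)$ and $\sum_{t:\, u_t \in A} b_t \leq \sum_t b_t$ yields
\[
\rho_G(A) \;\geq\; ka + (k-1)\sum_{t=1}^k b_t - \binom{a}{2} - c'.
\]
The combinatorial heart of the proof is the preimage bound $c' \leq k(k-1)\,b_t$ for every $t \in [k]$: each $u_{t,\ell}$ can be the $U_t$-neighbor of at most $k^2-k$ elements of $X'_A$. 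For $t \in \{1,2\}$ this follows directly from the parameterization $x_{i,i+r}$ with $r \in \{0,\ldots,k^2-k-1\}$; for $t \geq 3$ it uses $L_{t-2}(i,i+r) \equiv (t-1)(i-1)+(r+1) \pmod{n}$ together with $\gcd(t-1,n)=1$ (since $n$ is prime and $t - 1 < n$) to pin down $i$ uniquely per $r$. Averaging over $t$ gives $c' \leq k(k-1)\min_t b_t \leq (k-1)\sum_t b_t$, and substituting back yields $\rho_G(A) \geq ka - \binom{a}{2} = a(2k-a+1)/2$.

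When $a \geq 1$ this is already $\geq k \geq 1$, contradicting $\rho_G(A) \leq 0$. The boundary case $a = 0$ I will handle with a small sharpening: here $\sum_{t:\, u_t \in A} b_t = 0$ exactly, so the same manipulation gives $\rho_G(A) \geq \sum_t b_t$, which is at least $1$ unless $\sum_t b_t = 0$; and if $a = 0$ and $\sum_t b_t = 0$, then $A \subseteq X$, whence $X'_A$ must be empty (each of its vertices would need $V$-neighbors in $A$), and then any $x \in X^*_A$ has no $A$-neighbor at all, contradicting (i). The step I expect to require the most care is the uniform preimage bound $c' \leq k(k-1)\,b_t$: the two special coordinates $t \in \{1,2\}$ (which are not Latin-square-controlled) must be verified separately from the Latin-square coordinates $t \geq 3$, where the primality of $n$ is essential.
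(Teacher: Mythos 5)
Your proof is correct. It shares the minimal-counterexample skeleton with the paper's proof of Claim~\ref{mad-2k} (minimum degree at least $k+1$ in $G[A]$, so $X\cap A$ consists only of the vertices $x_{i,i+r}$ with $i\ge2$ and the vertices $x_{1,1+r}$, and each of the former has its entire neighborhood in $A$), but it diverges at the decisive counting step, and your version is the more careful one. The paper reduces to counting $G[A\setminus X]$ and closes with the inequality $\alpha\ge|X'|$, where $\alpha=|A\cap(U_1\cup\cdots\cup U_k)|$, justified only by ``each $x\in X'$ has $N_G(x)\subset A$''; that justification does not establish the inequality, since the neighborhoods of distinct elements of $X'$ overlap (all of $x_{i,i},\dots,x_{i,i+k^2-k-1}$ share the single $U_1$-neighbor $u_{1,i}$, and for $A=V(G)$ one even has $\alpha=kn<(k^2-k)(n-1)=|X'|$ when $k\ge3$ and $n$ is large). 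What the structure really yields is your fiber bound: each $u_{t,\ell}$ is the $U_t$-neighbor of at most $k^2-k$ elements of $X'_A$ --- for $t\in\{1,2\}$ directly from the parameterization, and for $t\ge3$ because $t-1$ is invertible modulo the prime $n$ --- whence $|X'_A|\le k(k-1)\min_t b_t\le(k-1)\sum_t b_t=(k-1)\alpha$, which is exactly the inequality the paper's final display requires. So your direct edge count plus the preimage bound is not merely a different route; it is the step that actually justifies the conclusion, and it is the only place in either argument where the Latin-square structure and the primality of $n$ are used. You also treat the boundary case $a=0$ (and the degenerate subcase $A\subseteq X$) explicitly, which the paper's chain $\frac{u^2+u}{2}+(k-1)\alpha\ge1+|X'|$ skips by tacitly assuming $u\ge1$.
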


\begin{proof}
Suppose that there is $A\subset V(G)$ such that  $\rho_G(A)\le 0$.
Let $A$ be a smallest subset of $V(G)$ among all subsets of $V(G)$ with minimum value $\rho_G(A)$.
That is, $A$ is a minimal counterexample to Claim \ref{mad-2k}.

If there is a $k^{-}$-vertex $v$ of $G[A]$, then $\rho_G(A\setminus\{v\})\le \rho_G(A)$, which is a contradiction to the minimality of $\rho_G(A)$ or the minimality of $|A|$. Thus there is no $k^{-}$-vertex in $G[A]$.
Thus if a vertex $x$ in $X \cap A$ has degree $ k+1$ in $G$, then $N_G(x)\subset A$.

Therefore if $x \in X$ has degree $k$ in $G$, then $x\not\in A$, and
if a vertex $x$ in $X \cap A$ has degree $ k+1$ in $G$, then $N_G(x)\subset A$.
Let $X'$ be the set of $(k+1)$-vertices of $G$ in $X\cap A$.
Thus every vertex in $X'$ is also a $(k+1)$-vertex in $G[A]$.
Since any two vertices in $X'$ are not adjacent in $G$, we have
\[ \rho_{G}(A\setminus X') = k|A\setminus X'|-||A\setminus X'|| = k|A|-k|X'|-||A||+ (k+1)||X'|| = \rho_G(A) +|X'|.\]
Since $\rho_G(A)\le 0$,
\begin{eqnarray}\label{eq-claim4}
\rho_{G}(A\setminus X') \le |X'|.
\end{eqnarray}

On the other hand,  all the vertices in $(A\setminus X')\cap X$ have degree at most $k$ in $G[A\setminus X']$. Then $\rho_G(A\setminus X') \ge \rho_G(A\setminus X)$.
Then all those vertices in $A \setminus (X\cup U)$ are pendent vertices in $G[A\setminus X]$.
Let $\alpha=|A\setminus (X\cup U)|=|A\cap (U_1\cup \cdots U_k)|$ for simplicity.
Note that $\alpha \ge |X'|$, since for vertex $x$ in $X'$, $N_G(v)\subset A$.
Let $u=|A\cap U|$.
Therefore, $G[A\setminus X]$ has $u+\alpha$ vertices and has $\frac{u^2-u}{2}+\alpha$  edges.
Thus
\begin{eqnarray*}
   \rho_G(A\setminus X') \ge \rho_G(A\setminus X) &\ge&  ku+k\alpha-\frac{u^2-u}{2}-\alpha \\
   &\ge& u^2+k\alpha-\frac{u^2-u}{2}-\alpha = \frac{u^2+u}{2}+(k-1)\alpha \\
&\ge& 1+(k-1)|X'| \ge 1+|X'| ,
\end{eqnarray*}
a contradiction to \eqref{eq-claim4}.
Thus  $\rho_G(A) \geq 1$ for every subset $A \subset V(H)$.
Hence, $mad(G)<2k$.
\end{proof}


\begin{thebibliography}{10}




\bibitem{BLP-DM}  M. Bonamy, B. L\'{e}v\^{e}que, and A. Pinlou, Graphs with maximum degree $\Delta \geq 17$ and maximum average degree less than 3 are list 2-distance $(\Delta+2)$-colorable, {\it Discrete Math.}, {\bf 317} (2014), 19--32.

\bibitem{BLP-14-JGT} M. Bonamy, B. L\'{e}v\^{e}que, and  A. Pinlou, 2-distance coloring of sparse graphs, {\it J. Graph Theory},  {\bf 77} (2014),  190--218.

\bibitem{BLP-14-EuJC} M. Bonamy, B. L\'{e}v\^{e}que, and  A. Pinlou, List coloring the square of sparse graphs with large degree, {\it European J. Combin.}, {\bf 41} (2014), 128--137.

\bibitem{BI-2009} O. V. Borodin and A. O. Ivanova, 2-distance $(\Delta+2)$-coloring of planar graphs with girth six and $\Delta\ge18$, {\it Discrete Math.}, {\bf 309} (2009), 6496--6502.


\bibitem{Charpentier14} C. Charpentier, 2-distance coloring of not-so-sparse graphs, manuscript, 2014.


\bibitem{DKNS-2008}
Z. Dvo$\check{\mbox{r}}$\'{a}k, D. Kr\'{a}l, P. Nejedl\'{y}, and R. $\check{\mbox{S}}$krekovski, Coloring squares of planar graphs with girth six,
   {\it European J. Combin.}, {\bf 29} (2008), 838--849.

\bibitem{Van-Lint} J. H. van Lint and R. M. Wilson, A course in combinatorics, Cambridge University Press, 1992.
\bibitem{WL2003} W.-F. Wang and K.-W. Lih,
Labeling planar graphs with conditions on girth and distance two,
{\it SIAM J. Discrete Math.}, {\bf 17}, (2003), 264-–275.

\bibitem{Wegner} G. Wegner, Grpahs with given diameter and a coloring problem, Technical Report, 1977.

\bibitem{Yancey15} M. Yancey, Coloring the square of a sparse graph $G$ with almost $\Delta(G)$ colors, manuscript, 2015.



\end{thebibliography}
\end{document}